\useunder{\uline}{\ul}{}
\tikzset{join/.code=\tikzset{after node path={%
			\ifx\tikzchainprevious\pgfutil@empty\else(\tikzchainprevious)%
			edge[every join]#1(\tikzchaincurrent)\fi}}}
\tikzset{>=stealth',every on chain/.append style={join},
	every join/.style={->}}
\tikzstyle{labeled}=[execute at begin node=$\scriptstyle,
\renewcommand{\vec}[1]{\mbox{\boldmath$#1$}}
\newcommand{\bel}[1]{\begin{equation}\label{#1}}
	\newcommand{\be}{\begin{equation}}
		\newcommand{\sgn}{\ensuremath{\mathrm{Sgn}}}
		\newcommand{\ie}{\ensuremath{\mathrm{i.e.,}}}
		\newcommand{\vol}{\ensuremath{\mathrm{vol}}}
			\newcommand{\diag}{\ensuremath{\mathrm{diag}}}
		\newcommand{\Hmm}[1]{\leavevmode{\marginpar{\tiny%
					$\hbox to 0mm{\hspace*{-0.5mm}$\leftarrow$\hss}%
					\vcenter{\vrule depth 0.1mm height 0.1mm width \the\marginparwidth}%
					\hbox to
					0mm{\hss$\rightarrow$\hspace*{-0.5mm}}$\\\relax\raggedright #1}}}
		\newtheorem{theorem}{Theorem}[section]
		\newtheorem{lemma}[theorem]{Lemma}
		\newtheorem{corollary}[theorem]{Corollary}
		\newtheorem{definition}[theorem]{Definition}
		\newtheorem{remark}[theorem]{Remark}
		\newtheorem{proposition}[theorem]{Proposition}
		\DeclareMathOperator*{\argmin}{\ensuremath{arg\,min}}
\begin{document}
			\title{Dual Cheeger Constants, Signless 1-Laplacians and Maxcut}
			
			\author{Sihong Shao}
\email{sihong@math.pku.edu.cn}

\address{CAPT, LMAM and School of Mathematical Sciences,  Peking University, Beijing 100871, China}

			\author{Chuan Yang}
			\email{chuanyang@pku.edu.cn}
			
			\address{LMAM and School of Mathematical Sciences,  Peking University, Beijing 100871, China}
			
			\author{Dong Zhang}
			\email{dongzhang@math.pku.edu.cn}
			
			\address{LMAM and School of Mathematical Sciences,  Peking University, Beijing 100871, China}

			\begin{abstract}
				The first nontrivial lower bound of the worst-case approximation ratio for the maxcut problem was achieved via the dual Cheeger problem, whose optimal value is referred to the dual Cheeger constant $h^+$, and later improved through its modification $\widehat{h}^+$. However, the dual Cheeger problem and its modification themselves are relatively unexplored, especially lack of effective approximate algorithms. To this end, we first derive equivalent spectral formulations of $h^+$ and  $\widehat{h}^+$ within the framework of the nonlinear spectral theory of signless 1-Laplacian, present their interactions with the Laplacian matrix and {1-Laplacian}, and then use them to develop an inverse power algorithm that leverages the local linearity of the objective functions involved. We prove that the inverse power algorithm monotonically converges to a ternary-valued eigenvector, and provide the approximate values of $h^+$ and  $\widehat{h}^+$ on G-set for the first time. The recursive spectral cut algorithm for the maxcut problem can be enhanced by integrating into the inverse power algorithms, leading to significantly improved approximate values on G-set. Finally, we show that the lower bound of the worst-case approximation ratio for the maxcut problem within the recursive spectral cut framework can not be improved beyond \textcolor{black}{$0.769$}. 
			\end{abstract}
			\maketitle
			\makeatletter
			\@addtoreset{equation}{section}
			\makeatother   
			
			
			Keywords: 
			signless 1-Laplacian;
			dual Cheeger constant;
			maxcut;
			worst-case approximation ratio;
			inverse power method;
			NP-hardness;
			fractional programming;
			spectral graph theory
			
			
			Mathematics Subject Classification:  05C85;
			90C27;
			58C40;
			35P30;
			05C50
			

			\section{Introduction}

			Given an unweighted and undirected graph $G=(V,E)$ with vertex set $V=\{1,2,\dots,n\}$ and edge set $E$,
			the Cheeger constant  (i.e., the Cheeger problem)
			\begin{equation}\label{eq:Cheeger0}
				h(G)=\min\limits_{S\subset V,S\not\in\{\emptyset,V\}} \frac{|E( S,S^c)|}{\min\{\vol(S),\vol(S^c)\}}
			\end{equation}
			is often adopted to calculate the conductance \cite{Alon1986}, where $E(V_1,V_2)$ collects the set of edges that cross $V_1$ and $V_2$, 
			$|E(V_1,V_2)|$ gives its amount, $\vol(V_1)=\sum_{i\in V_1} d_i$ with $d_i$ being the degree of the vertex $i$,
			and $S^c=V\backslash S$ denotes the complement of $S$ in $V$. 
			After introducing an indicative vector $\vec x=(x_1, x_2, \dots, x_n)\in\mathbb{Z}_2^n=\{0,1\}^n${\color{black}, where} $x_i$ equals  $1$ if $i\in S$, and $0$ if $i\in S^c$, 
			namely, $\vec x = \vec 1_S$, we are able to rewrite $h(G)$ into 
			\begin{equation}\label{eq:Cheeger1}
				h(G)=\min_{\vec x\in\mathbb{Z}_2^n\backslash\{\vec 0\},\|\vec x\|_{p,d}\leq \frac{1}{2}\vol(V)}\frac{I_p(\vec x)}{\|\vec x\|_{p,d}}, \quad p\in\{1, 2\},
			\end{equation}
			where $I_p(\vec x)=\sum_{\{i,j\}\in E}|x_i-x_j|^p$ and $\|\vec x\|_{p,d}=\sum_{i\in V} d_i|x_i|^p$.
			In view of the fact that $I_p(\vec x)$ and $\|\vec x\|_{p,d}$ are both homogeneous functions of degree $p$,
			one turns to search for the minimizers of $I_p(\vec x)$ in $X_p=\{\vec x\in \mathbb{R}^n: \|\vec x\|_{p,d}=1\}$
			after an often-used relaxation of $\vec x$ from $\mathbb{Z}_2^n$ to $\mathbb{R}^n$.
			In the case of $p=2$, using the standard Lagrange multiplier technique yields the $\Delta_2$-eigenproblem
			\begin{equation}\label{egL2}
				\Delta_2\vec x=\lambda D \vec x, 
			\end{equation}
			where $\Delta_2 \vec x=\frac12 \partial I_2(\vec x)$, $D\vec x = \frac12 \partial \|\vec x\|_{2,d}$ and $D=\diag{(d_1,d_2,\dots,d_n)}$. 
			If all the eigenvalues are arranged in an ascending order: $0=\lambda_1\le \lambda_2\le\cdots\le \lambda_n\leq 2$,
			then we have the famous Cheeger inequality \cite{Chung1997}, 
			\begin{equation}\label{ieq:che0}
				\frac{\lambda_2}{2}\le h(G) \le \sqrt{2\lambda_2},
			\end{equation}
			which adopts $\lambda_2$, the second smallest $\Delta_2$-eigenvalue,  to bound $h(G)$. 
			Usually, $\Delta_2$ has been also known as the so-called the Laplacian matrix or the graph Laplacian. As a contrast, for the case of $p=1$,  using a similar but formal Lagrange multiplier technique, one may arrive at the $\Delta_1$-eigenproblem \cite{HeinBuhler2010,SzlamBresson2010,Chang2015}
			\begin{equation}\label{eq:L1}
				\vec 0\in \Delta_1 \vec x - \mu D \sgn(\vec x)\;\;(\text{or }\Delta_1 \vec x \cap \mu D \sgn(\vec x)\neq \emptyset),
			\end{equation}
			where $\Delta_1 \vec x=\partial I_1(\vec x)$ and $D\sgn(\vec x)=\partial \|\vec x\|_{1,d}$ still hold formally, albeit with
			the subgradient ``$\partial$" and set-valued sign function $\sgn(t)$ that returns $\{1\}$ for $t>0$, $\{-1\}$ for $t<0$, and $[-1,1]$ for $t=0$. Compared with the inequality relation shown in {Equation~}\eqref{ieq:che0} for $p=2$, a remarkable equality relation was then digged out \cite{Chang2015,ChangShaoZhang2015}
			\begin{equation}\label{eq:cheeger}
				h(G)=\mu_2,
			\end{equation}
			where $\mu_2$ gives the second smallest $\Delta_1$-eigenvalue. More importantly, 
			the corresponding $\Delta_1$-eigenvectors provide exact Cheeger cuts \cite{Chang2015} where $\Delta_1$ was called the 1-Laplacian. {\color{black}Figure~}\ref{fig::eigenproblem1} cartoons the relations among $h(G)$, $\mu_2$ and $\lambda_2$. Several dedicated algorithms have been developed for approximating $h(G)$ through corresponding eigenvectors \cite{Luxburg2007,ChangShaoZhang2015},
			where the biggest strength of $\Delta_1$-eigenproblem over $\Delta_2$-eigenproblem is that 
			the characteristic function on any nodal domain of an eigenvector of the former is again an eigenvector with the same eigenvalue and thus it does not need any rounding to produce a binary cut \cite{ChangShaoZhang2017-am}, whereas a threshold rounding step is necessary to convert an eigenvector of the latter from $\mathbb{R}^n$ back to $\mathbb{Z}_2^n$.
			
			\tikzset{%
				>={Latex[width=2mm,length=2mm]},
				base/.style = {rectangle, draw=black,
					minimum width=2cm, minimum height=1cm,
					text centered},
				activityStarts/.style = {base},
				startstop/.style = {base},
				activityRuns/.style = {ellipse,draw=black,text centered, minimum height=1cm, inner xsep=-5pt},
				process/.style = {base, minimum width=2.5cm},
			}

			\begin{figure}[htbp]
				\centering
				\begin{tikzpicture}[scale=1,auto,node distance=2cm]
					
					\node[startstop] (node1)[align = center] {$\Delta_1$-eigenproblem};
					\node[startstop] (rel1) [right = of node1, align = center] {Cheeger problem};
					\node[activityRuns] (node2) [right = of rel1, align = center] {$\Delta_2$-eigenproblem};
					\path[<->] (node1) edge node {$p=1$} (rel1);
					\path[<->] (rel1) edge node {equivalent} (node1);
					\path[-] (rel1) edge node {$p=2$}(node2);
					\path[<-] (node2) edge node {relaxed}(rel1);
				\end{tikzpicture}
				\caption{The Cheeger problem given in {\color{black}Equation~}\eqref{eq:Cheeger0} or \eqref{eq:Cheeger1} and its relations with the $\Delta_1$- and $\Delta_2$-eigenproblems shown in {\color{black}Equations~}\eqref{egL2} and \eqref{eq:L1}, respectively.}
				\label{fig::eigenproblem1}
			\end{figure}
			
			This paper continues the study and aims to leverage the equivalence as displayed in {\color{black}Figure~}\ref{fig::eigenproblem1} to investigate the graph cut problems
			\begin{align}
				h^+(G)&=\max\limits_{V_1\cap V_2=\emptyset,V_1\cup V_2\ne \emptyset} \frac{2|E(V_1,V_2)|}{\vol(V_1\cup V_2)},\label{eq:dual-Cheeger-constant}\\
				\widehat{h}^+(G)&=\max\limits_{V_1\cap V_2=\emptyset,V_1\cup V_2\ne \emptyset} \frac{2|E(V_1,V_2)|+|E\left(V_1\cup V_2,(V_1\cup V_2)^c\right)|}{\vol(V_1\cup V_2)+|E\left(V_1\cup V_2,(V_1\cup V_2)^c\right)|}.\label{eq:modified-dual-Cheeger-constant}
			\end{align}
			Here $h^+(G)$ was named the dual Cheeger constant in respect to a ``dual" version of {\color{black}Equation~}\eqref{ieq:che0} \cite{Trevisan2012,BauerJost2013}
			\begin{equation}\label{eq:dual-cheeger-inequality}
				1-\sqrt{\lambda_n(2-\lambda_n)}\le h^+(G) \le \frac{\lambda_n}{2},
			\end{equation}
			which adopts $\lambda_n$, the largest $\Delta_2$-eigenvalue,  to bound $h^+(G)$. 
			Based on the dual Cheeger problem \eqref{eq:dual-Cheeger-constant}, 
			a recursive spectral cut (RSC) algorithm
			was proposed to approximate the maxcut problem
			\begin{equation}\label{eq:maxcut0}
				h_{\max}(G) =\max\limits_{V_1\subset V} \frac{2|E(V_1,V_1^c)|}{\vol(V)},
			\end{equation}
			and achieved an approximation worst-case ratio of at least $0.531$ \cite{Trevisan2012}.
			This represents a positive milestone for approximating $h_{\max}(G)$ since 
			it gives the first approximation ratio of strictly larger than 1/2 that is not based on the semidefinite programming (SDP) relaxation. Within the same RSC framework, this ratio was later improved to $0.614$ after replacing 
			$h^+(G)$  with $\widehat{h}^+(G)$ because the latter raises the importance of the edges crossing the ``uncut part'' $(V_1\cup V_2)^c$ \cite{Soto2015}. In fact, a refined inequality connecting $h^+(G)$, $\lambda_n$, $h_{\max}(G)$ and $\widehat{h}^+(G)$ can be readily derived.
			
			\begin{theorem}
				\label{th:1-ieq}
				We have
				\begin{equation}\label{eq:relation-inequality}
					\frac12\le h_{\max}(G)\le h^+(G)\le \widehat{h}^+(G)\le \frac{\lambda_n-h^+(G)}{1+\lambda_n-2h^+(G)} \le \frac{\lambda_n-\widehat{h}^+(G)}{1+\lambda_n-2\widehat{h}^+(G)} \le \frac{\lambda_n}{2}\le1.
				\end{equation}
			\end{theorem}
			
			Theorem~\ref{th:1-ieq} implies that the dual Cheeger inequality \eqref{eq:dual-cheeger-inequality} still holds when we replace $h^+(G)$ with $\widehat{h}^+(G)$. In this sense, we call $\widehat{h}^+(G)$ the modified dual Cheeger constant and {\color{black}Equation~}\eqref{eq:modified-dual-Cheeger-constant} the modified dual Cheeger problem. 
			For the sake of brevity, we denote $I(\vec x)=I_1(\vec x)$, $\|\vec x\|=\|\vec x\|_{1,d}$ and $X=X_1$ hereafter. Let $\mathbb{Z}_3^n=\{-1,0,1\}^n$, we are able to rewrite $h^+(G)$ and $\widehat{h}^+(G)$ into
			\begin{align} 
				1-h^+(G) &=\min\limits_{\vec x\in\mathbb{Z}_3^n\backslash\{\vec 0\}}\frac{I^+(\vec x)}{\|\vec x\|}, \label{eq:dc-discrete} \\
				1-\widehat{h}^+(G) &=\min\limits_{\vec x\in\mathbb{Z}_3^n\backslash\{\vec 0\}}\frac{I^+(\vec x)}{I^+(\vec x)+I(\vec x)}, \label{eq:mdc-discrete}
			\end{align}
			where $I^+(\vec x)=\sum_{\{i,j\}\in E}|x_i+x_j|$.  Analogous to the  $\Delta_1$-eigenproblem \eqref{eq:L1} for $h(G)$, we accordingly formulate a $\Delta_1^+$-eigenproblem with $(\mu^+,\vec x)\in\mathbb{R}^1\times {X}$ for $h^+(G)$: 
			\begin{equation}
				\vec 0 \in \Delta_1^+ \vec x - \mu^+ D \sgn(\vec x)\;\;(\text{or }\Delta_1^+ \vec x\cap \mu^+ D \sgn(\vec x)\neq \emptyset),\label{eq:L1-plus}
			\end{equation}
			with $\Delta_1^+\vec x=\partial I^+(\vec x)$, as well as a $\widehat{\Delta}_1^+$-eigenproblem with $(\widehat{\mu}^+,\vec x)\in\mathbb{R}^1\times {X}$ for $\widehat{h}^+(G)$:
			\begin{equation}
				\vec{0} \in (1-\widehat{\mu}^+)\Delta_{1}^{+} \vec{x}-\widehat{\mu}^+ \Delta_1\vec x \;\;\left(\text {or }\, \widehat{\mu}^+ \Delta_1\vec x \cap (1-\widehat{\mu}^+)\Delta_{1}^{+} \vec x\neq \emptyset\right),
				\label{eq:modified-L1-plus}
			\end{equation}
			where $\widehat{\Delta}_1^+$ can be regarded as a linear combination of $\Delta_1$ and $\Delta_1^+$. 
			Then, we are still able to establish the equality relations between the dual Cheeger constants and their corresponding eigenproblems.
			
			\begin{theorem}
				\label{th:1-eigen}
				\begin{align}
					1- h^+(G)&=\mu^+_1=\min\limits_{\vec  x\in\mathbb{R}^n\backslash\{\vec 0\}}\frac{I^+(\vec  x)}{\|\vec  x\|},\label{eq:min}\\
					1- \widehat{h}^+(G)&=\widehat{\mu}^+_1=\min\limits_{\vec  x\in\mathbb{R}^n\backslash\{\vec 0\}}\frac{I^+(\vec  x)}{I^+(\vec x)+I(\vec x)},\label{eq:modified-min}
				\end{align}
				where $\mu_1^+$ and $\widehat{\mu}_1^+$ give the smallest eigenvalues of $\Delta_1^+$-eigenproblem \eqref{eq:L1-plus} and $\widehat{\Delta}_1^+$-eigenproblem \eqref{eq:modified-L1-plus}, respectively.
			\end{theorem}
			
			We call $\Delta_1^+$ the signless 1-Laplacian and $\widehat{\Delta}_1^+$ the modified signless 1-Laplacian. In a word, our effort will make the research-network displayed in {\color{black}Figure~}\ref{fig:relations} more systematic, productive and well developed. In particular, we obtain new connections among these objects rooted in the discrete graph cut problems and the continuous eigenproblems via the equivalent spectral graph theory (see the upper triangular part of {\color{black}Figure~}\ref{fig:relations}), 
			and exploit them to develop effective algorithms for approximating $h^+(G)$, $\widehat{h}^+(G)$
			and thus $h_{\max}(G)$,  all of which are combinatorially NP-hard to be analytically determined.

			\tikzstyle{startstop} = [rectangle, minimum width=1cm, minimum height=1cm,text centered, draw=black]
			\tikzstyle{io1} = [rectangle, trapezium left angle=80, trapezium right angle=100, minimum width=1cm, minimum height=1cm, text centered, draw=black]
			\tikzstyle{io2} = [trapezium,  rounded corners, trapezium left angle=100, trapezium right angle=100, minimum width=1cm, minimum height=1cm, text centered, draw=black]
			\tikzstyle{process} = [rectangle, minimum width=1cm, minimum height=1cm, text centered, draw=black]
			\tikzstyle{decision} = [circle, minimum width=1cm, minimum height=1cm, text centered, draw=black]
			\tikzstyle{decision2} = [ellipse, rounded corners=10mm, minimum width=2cm, minimum height=2cm, text centered, draw=black]
			\tikzstyle{arrow} = [thick,->,>=stealth]

			\begin{figure}[htbp]
				\centering
				\begin{tikzpicture}[scale=1, node distance=3.5cm]
					
					\node (maxcut) [startstop] { $h_{\max}$};
					
					\node (2Lap) [decision, below of=maxcut, xshift=0cm, yshift=1cm] {  
						$\Delta_2$
					};
					
					\node (mdual) [startstop][startstop, right of=maxcut, xshift=2cm, yshift=1cm] {  $\widehat{h}^+$
					};
					
					\node (dual) [startstop][startstop, below of=mdual, xshift=0cm, yshift=1cm] {  $h^+$
					};
					
					\node (Cheeger) [startstop][startstop, below of=dual, xshift=0cm, yshift=1cm] {  $h$
					};
					
					\node (hat1Lap) [startstop][startstop, right of=mdual, xshift=1cm, yshift=0cm] {  $\widehat{\Delta}^+_1$
					};
					
					\node (s1Lap) [startstop][startstop, right of=dual, xshift=1cm, yshift=0cm] {  $\Delta_1^+$
					};
					
					\node (1Lap) [startstop][startstop, right of=Cheeger, xshift=1cm, yshift=0cm] {  $\Delta_1$
					};
					
					\draw  [thick,->,>=stealth](maxcut) to node[above=5pt,left] {  Soto \cite{Soto2015} } node[anchor=north] {}
					(mdual);
					
					\draw  [thick,->,>=stealth](maxcut) to node[below=5pt,left] { Trevisan \cite{Trevisan2012}} 
					(dual);
					
					\draw  [thick,->,>=stealth](2Lap) to node[above=5pt,left] { Bauer-Jost \cite{BauerJost2013}} 
					(dual);
					
					\draw  [thick,->,>=stealth](2Lap) to node[below=5pt,left] {Chung \cite{Chung1997}} node[below=12pt] {Alon-Milman \cite{Alon1986}}
					(Cheeger);
					\draw  [thick,->,>=stealth](Cheeger) to node[anchor=south] { Chang \cite{Chang2015} } node[anchor=south,below] { Hein-B\"{u}hler \cite{HeinBuhler2010} } (1Lap);
					\draw  [thick,->,>=stealth](1Lap) to node[above=5pt,left] {{Theorem \ref{thm:sign-and-signless}} } (s1Lap);
					\draw  [thick,->,>=stealth](dual) to node[anchor=south] {Theorem \ref{th:1-eigen}
					} (s1Lap);
					\draw  [thick,->,>=stealth](mdual) to node[anchor=south] {Theorem \ref{th:1-eigen} } (hat1Lap);
					\draw  [thick,->,>=stealth](s1Lap) to node[above=5pt,left] {{Proposition \ref{pro:h+=1}}} node[below=10pt,left] {{\textcolor{black}{Theorem \ref{th:1-ieq}}}} (hat1Lap);
				\end{tikzpicture}
				\caption{Operators, constants and connections investigated in this paper. The explicit reference to the graph G is neglected here for simplicity.}
				\label{fig:relations}
			\end{figure}
			
			We first show that 
			the ``Rayleigh-type  quotients'' 
			emerging in Theorem~\ref{th:1-eigen} 
			are locally linear along any given direction,
			and then use such local linearity to develop 
			the inverse power algorithms for approximating $\mu_1^+$ and $\widehat{\mu}_1^+$.
			The resulting algorithms are denoted as ``{\rm{\bf{IP}}}" and ``$\widehat{\rm \bf{IP}}$", respectively. 
			It is proved that the iterative values of IP (resp. $\widehat{\text{IP}}$) are monotonically decreasing and converging to some eigenvalues of $\Delta_1^+$ (resp. $\widehat{\Delta}_1^+$),
			and the iterative points converge to the corresponding eigenvectors which are ternary cuts and directly serve as the approximate solutions for the (resp. modified) dual Cheeger problem.  
			We use {\rm{\bf{IP}}}  (resp. $\widehat{\rm \bf{IP}}$) to provide the approximate values of $h^+$ (resp. $\widehat{h}^+$) on G-set for the first time. After that, we replace the rounding procedure,  2-Thresholds Spectral
			Cut,  with {\rm{\bf{IP}}} or $\widehat{\rm \bf{IP}}$ in the RSC algorithm for the maxcut problem~\eqref{eq:maxcut0}
			and find that these enhanced RSC algorithms yield approximate solutions of higher quality than the original one on the standard testbed G-set. It should be pointed out that such efficient implementation of RSC for maxcut using  
			the equality relations revealed in Theorem~\ref{th:1-eigen}  has not been noticed
			previously to the best of our knowledge. \textcolor{black}{Meanwhile, we prove that 
				the lower bound of the worst-case approximation ratio of RSC for maxcut cannot be improved greater than $0.769$}.  
			
			The rest of this paper is organized as follows. 
			Section~\ref{sec:signless} presents the spectral theory for $\Delta_1^+$ and $\widehat{\Delta}_1^+$, along with their connections to $\Delta_1$ and $\Delta_2$. In Section~\ref{sec:local-IP}, we analyze the objective functions associated with the $\Delta_1^+$- and  $\widehat{\Delta}_1^+$-eigenproblems, and propose the inverse power algorithm for seeking the optimal solution of the (modified) dual Cheeger problem. Section~\ref{sec:maxcut} enhances the RSC algorithm for the maxcut problem using the equality relations revealed in Theorem~\ref{th:1-eigen}.  
			Section~\ref{sec:con} rounds the paper off with our conclusions.

			\bigskip

{\bf Acknowledgements.}
This work was supported by the National Key R~\&~D Program of China (No.~2022YFA1005102) and the National Natural Science Foundation of China (Grant Nos.~ 12401443, 12325112, 12288101). The authors would like to thank Professor Kung-Ching Chang 
for his long-term guidance, encouragement and support 
in mathematics, and useful comments on an earlier version of this paper.

			\section{Spectrum of signless $1$-Laplacians}
			\label{sec:signless}
			The nonlinear spectral theory of the 1-Laplacian $\Delta_1$ has been well documented \cite{Chang2015,ChangShaoZhang2015,ChangShaoZhang2017-am,HeinBuhler2010,SzlamBresson2010}.
			This section extends it to the signless $1$-Laplacians $\Delta_1^+$ and $\widehat{\Delta}_1^+$ in the same spirit of \cite{Chang2015,ChangShaoZhang2015}. For simplicity, we work on unweighted simple graphs, but all these results hold for graphs with positive weights.
			Without causing any ambiguity, the explicit reference to the graph $G$ is neglected sometimes hereafter such as $h_{\max}$, $h^+$, $\widehat{h}^+$.

			\begin{definition}[(Normalized) Ternary Vector]
				\label{def:ternary-cut}
				A vector $\vec x$ in $\mathbb{R}^n\setminus \{\vec 0\}$ is said to be a  {\sl ternary vector} if there exist two disjoint subsets $A$ and $B$ of $V$ such that $\vec x=\hat{\vec 1}_{A,B} \in X$ with
				\[ (\hat{\vec 1}_{A,B})_i=\begin{cases}
					1/\vol(A\cup B),&\text{ if }\;\; i\in A,\\
					-1/\vol(A\cup B),&\text{ if }\;\; i\in B,\\
					0,&\text{ if }\;\; i\not\in A\cup B.
				\end{cases}\]
				For any $\vec x\in\mathbb{R}^n\setminus \{\vec 0\}$, we use $\vec x_{+,-}$ to denote its \emph{normalized ternarization} 
				$\hat{\vec 1}_{D^+(\vec x),D^-(\vec x)}$ with $D^\pm(\vec x) = \{i\in V:\pm x_i>0\}$.
			\end{definition}
			
			\begin{lemma}\label{lem:eigen1}
				Let  $(\mu^+,\vec x)$ be an eigenpair of $\Delta^+_1$, and  let $\left(\widehat{\mu}^+, \vec{y}\right)$ be an eigenpair of $\widehat{\Delta}_1^+$.  Then we have:
				\begin{enumerate}
					\item $I^+(\vec x)=\mu^+\in [0,1]$;
					\item $(\mu^+,\vec x_{+,-})$ is also an eigenpair of $\Delta^+_1$;
					\item The distance between two distinct eigenvalues of $\Delta_1^+$ is at least $\frac{2}{n^2(n-1)^2}$;
					\item  $ I^+(\vec y)/\left(I(\vec y)+I^+(\vec y)\right)=\widehat{\mu}^+\in[0,1]$;
					\item $\left(\widehat{\mu}^+, \vec{y}_{+,-}\right)$ is also an eigenpair of $\widehat{\Delta}_1^+$;
					\item The distance between two distinct eigenvalues of $\widehat{\Delta}_1^+$ is at least $\frac{2}{n^2(n-1)^2}$.
				\end{enumerate}
			\end{lemma}
			
			\begin{proof}The first statement is standard, and thus we turn to the proof of the second statement. 
				Let $\hat{\vec x}=\vec x_{+,-}$. Then, by the definition of $\hat{\vec x}$, there holds $\sgn(\hat{x}_i)\supset \sgn(x_i)$, $i=1,2,\cdots,n$. Now, we begin to verify that $\sgn(\hat{x}_i+\hat{x}_j)\supset \sgn(x_i+x_j)$, whenever $\{i,j\}\in E$. If $x_ix_j\ge 0$, then $\hat{x}_i\hat{x}_j\ge0$ and thus it is easy to check that $\sgn(\hat{x}_i+\hat{x}_j)=\sgn(x_i+x_j)$. If $x_ix_j<0$, then $\hat{x}_i\hat{x}_j<0$ and hence $\sgn(\hat{x}_i+\hat{x}_j)=\sgn(0)=[-1,1]\supset \sgn(x_i+x_j)$. Therefore, we have proved that $\sgn(\hat{x}_i+\hat{x}_j)\supset \sgn(x_i+x_j)$, for any $\{i,j\}\in E$. 
				Since $(\mu^+,\vec x)$ is an eigenpair, by the componentwise formulation of the eigenequation \eqref{eq:L1-plus}, there exists $z_{ij}(\vec x)\in \sgn(x_i+x_j)$ satisfying
				\[
				z_{ji}(\vec x)=z_{ij}(\vec x),\,\forall\,\textcolor{black}{\{j,i\}\in E}
				\;\;
				\text{and}
				\;\;
				\sum_{\textcolor{black}{j:\{j,i\}\in E}} z_{ij}(\vec x)\in \mu^+ d_i \sgn (x_i),\,\, i=1, \cdots, n.
				\]
				If one takes $z_{ij}(\hat{\vec x})=z_{ij}(\vec x), \forall\,\textcolor{black}{\{j,i\}\in E}$, then
				$z_{ij}(\hat{\vec x})\in \sgn(\hat{x}_i+\hat{x}_j)$ and they also satisfy
				\[
				z_{ji}(\hat{\vec x})=z_{ij}(\hat{\vec x}),\,\forall\,\textcolor{black}{\{j,i\}\in E}
				\;\;
				\text{and}
				\;\;
				\sum_{\textcolor{black}{j:\{j,i\}\in E}} z_{ij}(\hat{\vec x})\in \mu^+ d_i \sgn (\hat{x}_i),\,\, i=1, \cdots, n.
				\]
				Consequently, $(\mu^+,\hat{\vec x})$ is an eigenpair, too. This completes the proof of the second statement. 
				
				Up to now, we have known that,  given two different $\Delta_1^+$-eigenvalues $\mu^+$ and $\tilde{\mu}^+$, there exist
				$ A_1,A_2, B_1,B_2\subset V$ such that $\mu^+=1-\frac{2|E(A_1,A_2)|}{\vol (A_1\cup A_2)}$ and $\tilde{\mu}^+=1-\frac{2|E(B_1,B_2)|}{\vol (B_1\cup B_2)}$
				through the normalized ternarization of corresponding $\Delta_1^+$-eigenvectors. Accordingly, we obtain
				\begin{align*}
					|\mu^+-\tilde{\mu}^+| &=
					\left|\frac{2|E(A_1,A_2)|}{\vol (A_1\cup A_2)}-\frac{2|E(B_1,B_2)|}{\vol (B_1\cup B_2)}\right|
					\\&=2\frac{\left||E(A_1,A_2)|\vol (B_1\cup B_2)-|E(B_1,B_2)|\vol (A_1\cup A_2)\right|}{\vol (A_1\cup A_2)\vol (B_1\cup B_2)}
					\\&\ge \frac{2}{\vol (A_1\cup A_2)\vol (B_1\cup B_2)}
					\ge \frac{2}{n^2(n-1)^2},
				\end{align*}
				which proves the third statement. 
				
				The proof of the last three statements related to the modified signless 1-Laplacian $\widehat{\Delta}_1^+$ is similar to the first three ones, and thus we will not write down the details.
			\end{proof}
			
			
			\begin{proof}[Proof of  Theorem \ref{th:1-eigen}] We shall use Lemma \ref{lem:eigen1} to prove {\color{black}Equations~}\eqref{eq:min} and  \eqref{eq:modified-min}. 
				We prove the second equality of {\color{black}Equation~}\eqref{eq:min} at the first place. 
				Since both  $I^+(\vec x)$ and $\|\vec x\|$  are positively $1$-homogenous, we have 
				$$
				\frac{I^+(\vec  x)}{\|\vec  x\|}= I^+(\vec  x)|_X, \quad \forall\, \vec  x\in X, 
				$$
				implying that the minimum of the function $I^+(\vec x)$ on $X$ is  a critical value and thus an eigenvalue of $\Delta^+_1$, 
				and then is equal to $\mu^+_1$.
				
				Then we turn out to prove the first equality of {\color{black}Equation~}\eqref{eq:min}. It is easy to calculate that
				\begin{align*}
					1-\frac{2|E(V_1,V_2)|}{\vol(V_1\cup V_2)}&=\frac{2|E(V_1,V_1)|+2|E(V_2,V_2)|+|E(V_1\cup V_2,(V_1\cup V_2)^c)|}{\vol(V_1\cup V_2)}
					\\&=I^+(\hat{\vec 1}_{V_1,V_2}),
				\end{align*}
				where $\hat{\vec 1}_{V_1,V_2}$ is a ternary vector in $X$ \textcolor{black}{as defined in Definition \ref{def:ternary-cut}}.
				Therefore, we have
				\begin{align*}
					1-h^+(G)&=1-\max\limits_{V_1\cap V_2=\emptyset,V_1\cup V_2\ne \emptyset} \frac{2|E(V_1,V_2)|}{\vol(V_1\cup V_2)}
					\\&=\min\limits_{V_1\cap V_2=\emptyset,V_1\cup V_2\ne \emptyset}\left(1-\frac{2|E(V_1,V_2)|}{\vol(V_1\cup V_2)}\right)
					\\&=\min\limits_{V_1\cap V_2=\emptyset,V_1\cup V_2\ne \emptyset}I^+(\hat{\vec 1}_{V_1,V_2})
					\ge \min\limits_{\vec x\in X}I^+(\vec x).
				\end{align*}

				On the other hand, let $\vec x^0$ be a minimizer of $I^+(\vec x)$ on $X$. 
				Then $\vec x^0$ must be a critical point of $I^+|_X$ and thus a $\Delta^+_1$-eigenvector with respect to $\mu^+_1$. Accordingly, by Lemma \ref{lem:eigen1} we deduce that there is a ternary vector $\hat{\vec x}^0$ which is also a $\Delta^+_1$-eigenvector corresponding to  $\mu^+_1$.
				By the construction of $\hat{\vec x}^0$, there exist $V_1^0$ and $V_2^0$ such that
				$$
				I^+(\hat{\vec x}^0)=1-\frac{2|E(V_1^0,V_2^0)|}{\vol(V_1^0\cup V_2^0)},
				$$
				which immediately yields
				\begin{align*}
					\min\limits_{\vec x\in X}I^+(\vec x)&=I^+(\hat{\vec x}^0)
					\\&\ge\min\limits_{V_1\cap V_2=\emptyset,V_1\cup V_2\ne \emptyset}\left(1-\frac{2|E(V_1,V_2)|}{\vol(V_1\cup V_2)}\right)
					\\&=1-h^+(G).
				\end{align*}
				The proof of  \eqref{eq:min} is then completed.
				
				
				
				Let $\mu^*=\min\limits_{\vec  x\in\mathbb{R}^n\backslash\{\vec 0\}}\frac{I^+(\vec x)}{I(\vec x)+I^+(\vec x)}\in [0,1]$,
				$\vec x^*\in\argmin\limits_{\vec  x\in\mathbb{R}^n\backslash\{\vec 0\}}\frac{I^+(\vec x)}{I(\vec x)+I^+(\vec x)}$, 
				and $\langle\cdot,\cdot\rangle$ denote the standard inner product in $\mathbb{R}^n$. We first prove the second equality of  {\color{black}Equation~}\eqref{eq:modified-min}. Fixing any $\vec s\in\partial I(\vec x^*)$, let   
				$$
				f(\vec x)=(1-\mu^*)I^+(\vec x)-\mu^* \langle\vec x,\vec s\rangle.
				$$
				Since the function $I(\cdot)$ is convex and homogeneous of degree one, we have 
				$$
				I(\vec{y}) =\langle\vec{y}, \vec{s}\rangle, \quad I(\vec{x}) \geq \langle\vec{x}, \vec s\rangle, 
				\quad \forall\, \vec{s} \in \partial I(\vec{y}), \quad \forall\, \vec{x}, \vec{y} \in \mathbb{R}^n.
				$$
				Then, $\forall\, \vec{x}\in \mathbb{R}^n$, $f(\vec x)\ge (1-\mu^*)I^+(\vec x)-\mu^*I(\vec x)\ge 0=f(\vec x^*)$. This proves that $\vec x^*\in\argmin_{\vec x\in\mathbb{R}^n}\{f(\vec x)\}$. 
				Thus, $\vec 0\in\partial f(\vec x^*)$ is satisfied. 
				That is, there exists $\vec z\in\partial I^+(\vec x^*)$ 
				such that $\vec 0=(1-\mu^*)\vec z-\mu^*\vec s$, which indicates that $(\mu^*,\vec x^*)$ is also an eigenpair of $\widehat{\Delta}_1^+$ and $\mu^*\ge \widehat{\mu}_1^+$.  On the other hand, we have $\widehat{\mu}_1^+ \ge \mu^*$ directly from Lemma \ref{lem:eigen1} and thus arrives at the second equality of {\color{black}Equation~}\eqref{eq:modified-min}. 
				
				We turn to prove the first equality of {\color{black}Equation~}\eqref{eq:modified-min}. Assuming that $(V_1,V_2)$ is a solution to the modified dual Cheeger constant on $G$, then the ternary vector $\hat{\vec 1}_{V_1,V_2}=(\vec1_{V_1}-\vec1_{V_2})/\vol(V_1\cup V_2)$ satisfies
				\[
				1-\widehat{h}^+(G)=\frac{I^+(\hat{\vec 1}_{V_1,V_2})}{I(\hat{\vec 1}_{V_1,V_2})+I^+(\hat{\vec 1}_{V_1,V_2})} \geq \widehat{\mu}_1^+.
				\]
				On the other hand, Lemma \ref{lem:eigen1} ensures that there exists a ternary eigenvector corresponding to $\widehat{\mu}_1^+$, which yields $1-\widehat{h}^+(G)\leq \widehat{\mu}_1^+$. The proof of   \eqref{eq:modified-min} is then completed.
			\end{proof}
			
			\begin{remark}
				The second equalities of both {\color{black}Equation~}\eqref{eq:min} and {\color{black}Equation~}\eqref{eq:modified-min} can be directly proved by the set-pair Lov{\'a}sz extension \cite{CSZZ-Lovasz18}, while the above proof presented in this paper is based on the spectral point of view (i.e., Lemma \ref{lem:eigen1}).
			\end{remark}
			
			It is known that a
			connected graph $G$ is bipartite if and only if $h^+(G)=1$ \cite{BauerJost2013}, and then by Theorem \ref{th:1-eigen}, $h^+(G)=1$ if and only if $\mu^+_1=0$. This derives
			that a connected graph $G$ is bipartite if and only if $\mu^+_1=0$. 
			Moreover, we have 
			\begin{proposition}
				\label{pro:h+=1}
				For a 
				graph $G$, we have
				$h^+(G)=1$ $\Longleftrightarrow$ $\widehat{h}^+(G)=1$ $\Longleftrightarrow$  $\mu^+_1=0$  $\Longleftrightarrow$  $\widehat{\mu}_1^+=0$  $\Longleftrightarrow$   $\lambda_n = 2$ $\Longleftrightarrow$ $G$ has a bipartite connected component. 
				If $G$ is further assumed to be connected, then 
				\begin{enumerate}
					\item  $h_{\max}(G)=1\Longleftrightarrow h^+(G)=1\Longleftrightarrow\widehat{h}^+(G)=1 \Longleftrightarrow \lambda_n=2 \Longleftrightarrow$  $G$ is bipartite;
					\item  $h_{\max}(G)=\lambda_n/2$ $\Longleftrightarrow$ $h^+(G)=\lambda_n/2$ $\Longleftrightarrow$ $\widehat{h}^+(G)=\lambda_n/2$.
				\end{enumerate}
			\end{proposition}
			
			\begin{proof}
				We only prove the last statement while the remaining \textcolor{black}{claims are} direct consequences of the combination of \textcolor{black}{Theorems \ref{th:1-ieq} and \ref{th:1-eigen}}. 
				
				Suppose that the set pair $(V_1,V_2)$ solves the modified dual Cheeger problem \eqref{eq:modified-dual-Cheeger-constant},
				and the indicative vectors are $\vec 1_{V_1}$ and $\vec 1_{V_2}$, respectively. A straightforward calculation gives
				\begin{align}
					\frac{4|E(V_1,V_2)|+|E\left(V_1\cup V_2,(V_1\cup V_2)^c\right)|}{\vol(V_1\cup V_2)} &= \frac{I_2(\vec 1_{V_1}-\vec 1_{V_2})}{\|\vec 1_{V_1}-\vec 1_{V_2}\|_{2,d}}, \label{eq:lambda_n}\\ 
					\frac{4|E(V_1,V_2)|+2(1-\widehat{h}^+)|E\left(V_1\cup V_2,(V_1\cup V_2)^c\right)|}{\vol(V_1\cup V_2)} &= 2\widehat{h}^+. \label{eq:2hhat+}
				\end{align}
				Using the Courant-Fischer theorem and the fact that $1/2\le\widehat{h}^+\le1$, we are able to arrive at $\widehat{h}^+\le {\lambda_n}/{2}$
				after comparing {\color{black}Equation~}\eqref{eq:lambda_n} with {\color{black}Equation~}\eqref{eq:2hhat+}, and thus 
				\begin{equation}
					\label{leq:lambda_n1}
					h_{\max}\le h^+\le \widehat{h}^+\le \frac{\lambda_n}{2}.
				\end{equation}
				It follows directly from Eq. \eqref{leq:lambda_n1} that  $h_{\max}=\lambda_n/2$ $\Longrightarrow$ $h^+=\lambda_n/2$ $\Longrightarrow$ $\widehat{h}^+=\lambda_n/2$. Next we prove the opposite direction. 
				Once $\widehat{h}^+=\lambda_n/2$, $|E\left(V_1\cup V_2,(V_1\cup V_2)^c\right)|=0$ is naturally satisfied because $2-2\widehat{h}^+=2-\lambda_n\leq (n-2)/(n-1)<1$. Since $G$ is connected, we have $V_1\cup V_2=V$, and thus 
				$(V_1,V_2)$ is also a solution to the maxcut problem \eqref{eq:maxcut0}, thereby implying that $h_{\max}=h^+=\lambda_n/2$.
			\end{proof}

			\begin{proof}[Proof of Theorem \ref{th:1-ieq}]  
				We only need to prove 
				\[
				\widehat{h}^+(G)\le \frac{\lambda_n-h^+(G)}{1+\lambda_n-2h^+(G)} \le \frac{\lambda_n-\widehat{h}^+(G)}{1+\lambda_n-2\widehat{h}^+(G)}\le \frac{\lambda_n}{2}.
				\]
				The second and last inequalities are respectively equivalent to $\widehat{h}^+\geq h^+$ and $\widehat{h}^+\le {\lambda_n}/{2}$, 
				both of which have been verified in the proof of Proposition \ref{pro:h+=1}. It remains to verify the first inequality. 
				Let $(V_1,V_2)$ solve the modified dual Cheeger problem \eqref{eq:modified-dual-Cheeger-constant}.
				Then, combining {\color{black}Equations~}\eqref{eq:lambda_n} and \eqref{eq:2hhat+} yields
				\begin{equation}
					\label{eq:2hhat+1}
					\widehat{h}^+=\left(2\widehat{h}^+-1\right)\frac{2|E(V_1,V_2)|}{\vol(V_1\cup V_2)}+\left(1-\widehat{h}^+\right) \frac{I_2(\vec 1_{V_1}-\vec 1_{V_2})}{\|\vec 1_{V_1}-\vec 1_{V_2}\|_{2,d}}.
				\end{equation}
				The first inequality emerges accordingly 
				after applying the Courant-Fischer theorem and {\color{black}Equation~}\eqref{eq:dual-Cheeger-constant} into {\color{black}Equation~}\eqref{eq:2hhat+1}. 
			\end{proof}

			\begin{remark}
				\label{remark:maxcut}
				The dual Cheeger constants $h^+(G)$ and $\widehat{h}^+(G)$ serve as new sharper upper bounds for $h_{\max}(G)$ from {\color{black}Equation~}\eqref{eq:relation-inequality}.  All known upper bounds relate to either the maximal eigenvalue $\lambda_n$ of the normalized Laplacian or a finer constant
				\begin{equation}\label{eq:phi}
					\varphi:=\inf\limits_{\sum_{i=1}^n u_i=0}\text{maximum eigenvalue of }D^{-1/2}(\Delta_2+\diag(u_1,\cdots,u_n)) D^{-1/2},
				\end{equation}
				both of which are often used in the linear spectral graph theory,
				through $h_{\max}\leq{\lambda_n}/{2}$ and
				$h_{\max}\leq{\varphi}/{2}$ \cite{DelormePoljak1993,PoljakRendl1995}.
				Actually, as shown in Table~\ref{table:phih}, it is apparent that the upper bounds $h^+$ and $\widehat{h}^+$ are not explicitly correlated with $\varphi/2$, while all three upper bounds are sharper than the classical bound $\lambda_n/2$.
				Note in passing that a lower bound for $h_{\max}$
				was given in terms of the chromatic number \cite{PoljakTuza1995}.
			\end{remark}

			\begin{table}
				\renewcommand\arraystretch{1.4}
				\caption{Three graphs and corresponding values for
					\textcolor{black}{$h_{\max}$, $h^+$, $\widehat{h}^+$, $\lambda_n$},
					and $\varphi$ given in {\color{black}Equation~}\eqref{eq:phi}.
					It can be verified that the minimum order of graph \textcolor{black}{without isolated vertices} satisfying \textcolor{black}{$h^+=\widehat{h}^+=\varphi/2$, $h^+=\widehat{h}^+<\varphi/2$, and $h^+=\widehat{h}^+>\varphi/2$} is 2, 3, and 5, respectively, as shown in the first row.}
				\label{table:phih}
				\centering
				\begin{tabular}{c|c|c|c}
					\hline\hline
					{graph}
					& \;\;\;\;\;\;\;\;\;\;
					\begin{tikzpicture}
						\node (A) at (0,-1) {};
						\node (B) at (0,1)  {};
						\draw (A) to (B);
						\draw (A) circle(0.16);
						\draw (B) circle(0.16);
					\end{tikzpicture} \;\;\;\;\;\;\;\;\;\;
					& \;\;\;\;\begin{tikzpicture}[auto]
						\node (A) at (0,0) {};
						\node (B) at (2,1) {};
						\node (C) at (2,-1) {};
						\draw (A) to (B);
						\draw (C) to (B);
						\draw (C) to (A);
						\draw (A) circle(0.16);
						\draw (B) circle(0.16);
						\draw (C) circle(0.16);
					\end{tikzpicture} \;\;\;\; &
					\begin{tikzpicture}[auto]
						\node (A) at (0,0) {};
						\node (B) at (2,1) {};
						\node (C) at (2,-1) {};
						\node (D) at (3,-1) {};
						\node (E) at (3,1) {};
						\draw (A) to (B);
						\draw (C) to (B);
						\draw (C) to (A);
						\draw (D) to (E);
						\draw (A) circle(0.16);
						\draw (B) circle(0.16);
						\draw (C) circle(0.16);
						\draw (D) circle(0.16);
						\draw (E) circle(0.16);
					\end{tikzpicture}
					\\
					\hline
					$h_{\max}$ & $1$ & $2/3$  & $3/4$ \\
					\hline
					$h^+$ & $1$ & $2/3$ & $1$ \\
					\hline
					\textcolor{black}{$\widehat{h}^+$}&\textcolor{black}{$1$}&\textcolor{black}{$2/3$}&\textcolor{black}{$1$}\\
					\hline
					$\varphi/2$ & $1$& $3/4$ &  $<0.9$ \\
					\hline
					$\lambda_n/2$ & $1$ & $3/4$ & $1$ \\
					\hline\hline
				\end{tabular}
			\end{table}

			Next{\color{black},} we study the construction of the set of eigenvectors associate to 
			a given $\Delta^+_1$-eigenvalue. For any $\vec x\in X$, we introduce the simplex 
			$$\triangle(\vec x)=\{\vec y\in X:\sgn(y_i)=\sgn(x_i),~i=1,2,\cdots,n\}.$$
			Then $X=\bigcup_{\vec x\in X}\triangle(\vec x)$ can be viewed as a simplicial complex. 
			Note that  the family of hyperplanes $$\{\pi_{\{i,j\}}|\textcolor{black}{\{i,j\}\in E}\},\;\;\text{where }\pi_{\{i,j\}}=\{\vec x\in \mathbb{R}^n| x_i+x_j=0\},$$
			divides each simplex $\Delta(\vec x)$ \textcolor{black}{for $\forall\, \vec x\in X$} into more refined simplices of the form
			$$
			\triangle^+(\vec x)=\{\vec y\in \triangle(\vec x): \sgn(y_i+y_j)=\sgn(x_i+x_j),\,~\textcolor{black}{\{j,i\}\in E},\,~i=1,2,\cdots,n\}.
			$$
			Therefore, $X^+:=\bigcup_{\vec x\in X}\triangle^+(\vec x)$ is a refined simplicial complex of $X$, which coincides with $X$ in the sense of set. 
			As an example, {\color{black}Figure~}\ref{fig:XJP2:1} cartoons
			the complex $X$ and the refined complex $X^+$ for
			the path graph with two vertices. 
			
			\begin{figure}[htbp]
				\centering
				
				\begin{tikzpicture}[
					scale=1,
					IS/.style={blue, thick},
					LM/.style={red, thick},
					axis/.style={very thick, ->, >=stealth', line join=miter},
					important line/.style={thick}, dashed line/.style={dashed, thin},
					every node/.style={color=black},
					dot/.style={circle,fill=black,inner sep=2pt,
						outer sep=4pt},
					]
					
					\draw[->] (-3,0) -- (3,0) node[anchor=north] {$x_1$};
					
					\draw (2.3,0.3) node {$(1,0)$};
					\draw (0.5,2.1) node {$(0,1)$};
					\draw (-1.1,2.2) node {$x_1+x_2=0$};

					\node[dot] at (0,2) (int1) {};
					\node[dot] at (0,-2) (int1) {};
					\node[dot] at (2,0) (int1) {};
					\node[dot] at (-2,0) (int1) {};
					\node[dot] at (-1,1) (int1) {};
					\node[dot] at (1,-1) (int1) {};

					
					\draw[->] (0,-3) -- (0,3) node[anchor=east] {$x_2$};

					\draw (-2.5,2.5) -- (2.5,-2.5);
					\draw[very thick,dotted](2,0)--(0,2);
					\draw[very thick,dotted](-2,0)--(0,-2);
					\draw[very thick,dashed] (-2,0) -- (0,2);
					\draw[very thick,dashed] (2,0) -- (0,-2);
					
					
				\end{tikzpicture}
				
				\caption{\small The complex $X$ of the path graph with two vertices consists of four 0-cells (the four vertices of the square) and four 1-cells (the four sides of the square),
					while the corresponding refined complex $X^+$ consists of six 0-cells (black dots) and four small 1-cells (dashed lines) and two big 1-cells (dotted lines).}
				\label{fig:XJP2:1}
			\end{figure}

			By Lemma \ref{lem:eigen1}, we have:
			
			\begin{proposition}\label{lem:eigen2}
				If $(\mu^+,\vec x)$ is an eigenpair of $\Delta_1^+$, then $(\mu^+,\vec y)$ is also an eigenpair for any $\vec y\in \overline{\triangle^+(\vec x)}$, where $\overline{\triangle^+(\vec x)}$ is the closure of the set $\triangle^+(\vec x)$.
			\end{proposition}
			
			At the end of this section, we establish  a relation between the spectra of 1-Laplacian $\Delta_1$ and its signless version $\Delta^+_1$.
			Lemma~\ref{lem:eigen1} indeed implies that the number of $\Delta^+_1$-eigenvalues is finite.  
			Precisely, there are at most $\frac{n^2(n-1)^2}{2}+1$ different eigenvalues,  and
			then all the eigenvalues can be ordered as: $0\le\mu^+_1<
			\mu^+_2<\cdots\le 1$.
			However, due to the high nonlinearity,  
			we do not know a priori how many eigenvalues exist. Nevertheless it is always possible to select $n$ of them as representatives 
			of the whole spectrum. 
			In fact, Chang \cite{Chang2015} used the  Liusternik-Schnirelmann theory and the concept of Krasnoselskii genus to select the sequence of min-max eigenvalues of $\Delta_1$.  Parallel to this, we shall apply the Liusternik-Schnirelmann theory to $\Delta^+_1 $. Now, $I^+(\vec x)$ is even, and $X$ is symmetric. Let $T\subset X$ be a nonempty  symmetric subset, \ie $-T=T$. The integer valued function $\gamma: T \mapsto \mathbb{Z}^+$, which is called the Krasnoselski genus of $T$ \cite{Chang1985,Rabinowitz1986}, is defined to be:
			\begin{equation*}
				\gamma(T) =\min\limits\{k\in\mathbb{Z}^+: \exists\; \text{odd continuous}\; h: T\to \mathbb{S}^{k-1}\}. 
			\end{equation*}
			Obviously, the genus is a topological invariant. Let us define
			\begin{equation}\label{eq:c_k}
				c_k^+=\inf_{\gamma(T)\ge k} \max_{\vec x\in T} I^+(\vec x),\quad k=1,2, \cdots n.
			\end{equation}
			Along the same line of \cite{Chang2015,CSZZ18prepare}, it can be proved that these $c_k^+$ are critical values of $I^+(\vec x)$,   and thus they are eigenvalues of $\Delta_1^+$.  In addition, one has
			$$c_1^+\le c_2^+ \le \cdots \le c_n^+,$$
			and if $0\le\cdots\le c_{k-1}^+<c_k^+=\cdots=c_{k+r-1}^+<c_{k+r}^+\le \cdots\le 1$,  the (variational)  multiplicity of $c_k^+$ is simply defined to be $r$.  
			It is worth noting that for a general $\Delta_1^+$-eigenvalue $\mu^+$, 
			we use $\gamma(K_{\mu^+})$ to define the multiplicity of $\mu^+$, where $K_{\mu^+}=\{\vec x\in X:(\mu^+,\vec x)\text{ is an eigenpair of }\Delta_1^+\}$. However, in order to distinguish the two types of multiplicities,  
			we usually use the name $\gamma$-multiplicity instead of multiplicity for a general eigenvalue. In consequence,  
			we are in a position to show 
			a relation between $\Delta_1$ and $\Delta_1^+$. 		
			
			\begin{theorem}\label{thm:sign-and-signless}
				The following three statements are equivalent:
				\begin{itemize}
					\item[(a)] $G$ is bipartite;
					\item[(b)]  the spectra of $\Delta_1^+$ and $\Delta_1$ coincide (counting $\gamma$-multiplicity);
					\item[(c)]  the min-max eigenvalues of $\Delta_1^+$ agree with that of $\Delta_1$, i.e.,  $\forall\,k\in\{1,\cdots,n\}$, $c_k^+=c_k$. 
					Here $c_k$ is defined in the same way by using  $I(\vec x)$ instead  of $I^+(\vec x)$ in  {\color{black}Equation~}\eqref{eq:c_k}.
				\end{itemize}
			\end{theorem}
			
			\begin{proof}
				It is  known that the multiplicity of the eigenvalue 0 of $\Delta_1$ equals the number of connected  components of $G$. 
				Below we show a ``dual'' claim  of this result.
				\begin{enumerate}
					\item[Claim.] The multiplicity of the eigenvalue 0 of $\Delta_1^+$ equals the number of bipartite components of $G$. 
					
					Proof: 
					It is easy to see that the eigenvectors corresponding to  the eigenvalue 0 coincide with the zeros of $I^+(\cdot)$, i.e., $\{\vec x\in\mathbb{R}^n: x_i+x_j=0,\forall \{i,j\}\in E\}$. For simplicity, we denote $S_\pm=\{\vec x\in\mathbb{R}^n: x_i\pm x_j=0,\forall \{i,j\}\in E\}$.  
					Note that both $S_+$ and $S_-$ are linear subspaces. 
					If the graph $G$ has no bipartite component, then the equation system $x_i+x_j=0,\forall \{i,j\}\in E$ has only zero solution $x_i=0,\forall i$. Let $(V_1,E_1),\cdots,(V_k,E_k)$ be the bipartite components of $G$. Then each bipartite component $(V_i,E_i)$ provides a nonzero solution $\vec x=\vec 1_{V_i^+}-\vec 1_{V_i^-}$, where $V_i^+$ and $V_i^-$ are the two parts of $V_i$. Therefore, $S_+=\{\sum_{i=1}^kt_i(\vec 1_{V_i^+}-\vec 1_{V_i^-}):t_i\in\mathbb{R},i=1,\cdots,k\}$ and $\dim S_+=k$, 
					which completes the proof.
				\end{enumerate}
				
				We are able to prove Theorem \ref{thm:sign-and-signless}. 
				If the spectra of $\Delta_1^+$ and $\Delta_1$ coincide, or if $c_i^+=c_i$ for any  $i=1,\cdots,n$, then 
				the multiplicity of the eigenvalue 0 of $\Delta_1^+$ agrees with that of $\Delta_1$. 
				Then, by  the above claim, 
				the number of connected bipartite components of $G$ is equal to the number of connected  components of $G$. 
				This means that each  connected component of $G$ is bipartite! Hence, $G$ is bipartite too. Accordingly,  (b) $\Rightarrow$ (a), and (c) $\Rightarrow$ (a).
				
				Next, we shall prove   (a) $\Rightarrow$ (b) \& (c).  Suppose that $G=(V,E)$ is a bipartite graph with $V_+$ and $V_-$ being the two parts.  Taking $$M=\diag(a_1,\ldots,a_n),\;\; \text{ where }\; a_i=\begin{cases}
					1,&i\in V_+,\\ -1,& i\in V_-,
				\end{cases}$$
				it can be verified that $(\lambda,\vec x)$ is a $\Delta_1$-eigenpair if and only if $(\lambda,M \vec x)$ is a $\Delta_1^+$-eigenpair. 
				In consequence, considering $M$ as a linear isomorphism, we have  $M(K_\lambda(\Delta_1))=K_\lambda(\Delta_1^+)$ and $M(K_\lambda(\Delta_1^+))=K_\lambda(\Delta_1)$, for any eigenvalue $\lambda$ of $\Delta_1$ or $\Delta_1^+$, where $K_\lambda(\Delta_1)$ is the set of all $\Delta_1$-eigenvectors corresponding to the eigenvalue $\lambda$. 
				Denote by $m(\lambda,\Delta_1^+)$ (resp., $m(\lambda,\Delta_1)$) the multiplicity of the eigenvalue $\lambda$ of $\Delta_1^+$ (resp., $\Delta_1$). Then 
				$$m(\lambda,\Delta_1)=\gamma(K_\lambda(\Delta_1))=\gamma\left( M(K_\lambda(\Delta_1))\right)=\gamma(K_\lambda(\Delta_1^+))=m(\lambda,\Delta_1^+),$$
				where we used the fact that the genus of a set is invariant under any linear isomorphism. 
				In addition, for any  $i=1,\cdots,n$, \begin{align*}
					c_i^+&=\inf_{\gamma(S)\ge i} \max_{\vec x\in S} I^+(\vec x)=\inf_{\gamma(S)\ge i} \max_{\vec x\in M(S)} I^+(\vec x)=\inf_{\gamma(S)\ge i} \max_{\vec y\in S} I^+(M\vec y)
					\\&=\inf_{\gamma(S)\ge i} \max_{\vec y\in S} I(\vec y)=c_i,\end{align*}
				which means that the variational eigenvalues of $\Delta_1$ and $\Delta_1^+$ coincide, and furthermore, their  variational multiplicities also coincide. The proof is then completed.
			\end{proof}
			
			Theorem \ref{thm:sign-and-signless} provides a pretty nontrivial variant for the well-known relation between Laplacian and signless Laplacian on graphs. 
			
				%

				\section{Local analysis and the inverse power method}
				\label{sec:local-IP}
				
				From previous sections, we find that $I^+(\vec x)$ plays a central role in the study of the spectral theory for both $\Delta^+_1$ and $\widehat{\Delta}_1^+$. Before going to the numerical \textcolor{black}{part}, we shall analyze some local properties of $I^+(\vec x)$, by which the inverse power algorithm,  originally designed for the linear eigenvalue problem and recently extended to the Cheeger cut problem \cite{HeinSetzer2011}, can be proposed.
				
				The following elementary fact is well known.
				Let $g: [-1, 1]\to \mathbb{R}$ be a convex function, then $\partial g(0)=[g'(0,-), g'(0,+)],\, \mbox{or}\,\, [g'(0,+), g'(0,-)]$, where
				\begin{equation}
					g'(0,\pm)=\lim_{t\to \pm 0}\frac{g(t)-g(0)}{t}.
				\end{equation}
				
				\begin{lemma}\label{lemma:1-dim-PL-gradient}
					Let $g:[-1, 1]\to \mathbb{R}$ be a convex piecewise linear function. Then
					$$
					g(t)=g(0)+t
					\max_{s\in \partial g(0)}s
					$$
					holds for sufficiently small $t>0$.
				\end{lemma}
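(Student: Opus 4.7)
The plan is to exploit the piecewise linearity together with the elementary description of $\partial g(0)$ recalled just above the lemma. Since $g$ is piecewise linear on $[-1,1]$, there is some $\epsilon>0$ such that $g$ is affine on $[0,\epsilon]$ and affine on $[-\epsilon,0]$. Denote these affine pieces by slopes $m^+$ and $m^-$ respectively. Then by definition the one-sided derivatives at $0$ satisfy $g'(0,+)=m^+$ and $g'(0,-)=m^-$, and for every $t\in(0,\epsilon)$ one simply has
\begin{equation*}
g(t)=g(0)+t\,m^+ = g(0)+t\,g'(0,+).
\end{equation*}

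Next I would identify $g'(0,+)$ as $\max_{s\in\partial g(0)}s$. By the elementary fact quoted just before the statement, $\partial g(0)$ is the closed interval with endpoints $g'(0,-)$ and $g'(0,+)$. To pin down which endpoint is the maximum, I invoke convexity: for $h>0$ small, convexity of $g$ at $0$ gives
\begin{equation*}
\frac{g(0)-g(-h)}{h}\;\le\;\frac{g(h)-g(0)}{h},
\end{equation*}
and passing $h\to 0^+$ yields $g'(0,-)\le g'(0,+)$. Hence $\partial g(0)=[g'(0,-),g'(0,+)]$ and $\max_{s\in\partial g(0)}s=g'(0,+)=m^+$.

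Combining the two steps, for $t\in(0,\epsilon)$,
\begin{equation*}
g(t)=g(0)+t\,m^+ = g(0)+t\,\max_{s\in\partial g(0)}s,
\end{equation*}
which is what we wanted. There is no real obstacle here: the entire content sits in observing that a convex piecewise linear function is literally affine to the immediate right of $0$ with slope $g'(0,+)$, and that convexity forces this slope to be the right endpoint of $\partial g(0)$. The only thing to be slightly careful about is to choose $\epsilon$ small enough to lie inside a single linear piece of $g$, which is possible because a piecewise linear function has only finitely many breakpoints.
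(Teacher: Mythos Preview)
Your proof is correct and follows essentially the same approach as the paper: both use piecewise linearity to write $g(t)=g(0)+t\,g'(0,+)$ for small $t>0$, and then identify $g'(0,+)$ with $\max_{s\in\partial g(0)}s$ via the description of $\partial g(0)$ as the interval with endpoints $g'(0,\pm)$. The only cosmetic difference is that the paper first obtains the inequality $g(t)\ge g(0)+t\max_{s\in\partial g(0)}s$ directly from the definition of the subdifferential, whereas you instead verify $g'(0,-)\le g'(0,+)$ from convexity; these are equivalent observations.
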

				\begin{proof}
					On one hand, by the definition of the sub-differential, we have
					$$ g(t)\ge g(0)+rt,\,\,\,\forall\, r\in \partial g(0),$$
					which implies that
					$$ g(t)\ge g(0)+t \max_{s\in \partial g(0)}s.$$
					On the other hand, if $g$ is piecewise linear, then $\exists\, r\in \mathbb{R}$ such that $g(t)= g(0)+rt$ for small $t>0$. So, we have
					$$ g'(0, +)=r,$$
					and then
					$r= \max _{s\in \partial g(0)}s$. The proof is completed. 
				\end{proof}
				
				\begin{theorem}
					Let $f: \mathbb{R}^n\to \mathbb{R}$ be a convex piecewise linear function. Then $\forall\, \vec v\in \mathbb{R}^n\setminus\{\vec 0\}$, for sufficiently small $t>0$,  we have
					$$ f(\vec a+t\vec v)=f(\vec a)+t   \max_{\vec p\in \partial f(\vec a)}\textcolor{black}{\langle\vec v, \vec p\rangle}.$$
				\end{theorem}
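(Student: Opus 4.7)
The plan is to reduce this multivariate identity to the one-dimensional lemma just proved by restricting $f$ to the line through $\vec a$ in the direction $\vec v$. Set $g(t)=f(\vec a+t\vec v)$. Since $f$ is convex and $t\mapsto \vec a+t\vec v$ is affine, $g$ is convex. Moreover, piecewise linearity of $f$ (a finite polyhedral decomposition of $\mathbb{R}^n$ on each cell of which $f$ is affine) pulls back under this affine map to a finite partition of $\mathbb{R}$ into intervals on each of which $g$ is affine. Hence $g$ is a convex piecewise linear function of one variable, and the previous lemma applies.

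Applying that lemma to $g$, for sufficiently small $t>0$,
\[
g(t)=g(0)+t\max_{s\in\partial g(0)}s,
\]
which, rewritten in terms of $f$, reads $f(\vec a+t\vec v)=f(\vec a)+t\max_{s\in\partial g(0)}s$. It therefore suffices to prove the identification
\[
\max_{s\in\partial g(0)}s=\max_{\vec p\in\partial f(\vec a)}(\vec v,\vec p).
\]
Since by the lemma the left-hand side equals the one-sided derivative $g'(0,+)$, this is exactly the standard formula expressing the directional derivative of a convex function in terms of its subdifferential.

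The main obstacle, and the only substantive step, is establishing this directional derivative formula. One inequality is immediate: for any $\vec p\in\partial f(\vec a)$, the subgradient inequality $f(\vec a+t\vec v)\ge f(\vec a)+t(\vec v,\vec p)$ divides through by $t>0$ to give $g'(0,+)\ge (\vec v,\vec p)$, hence $g'(0,+)\ge\sup_{\vec p\in\partial f(\vec a)}(\vec v,\vec p)$. The reverse inequality, producing a maximising $\vec p\in\partial f(\vec a)$, is where a chain-rule style argument is needed; for piecewise linear $f$ it can be settled concretely by taking $\vec p$ to be the gradient of the affine piece of $f$ active along the ray $\vec a+t\vec v$ for small $t>0$. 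Using the subgradient inequality on each of the finitely many pieces meeting $\vec a$, together with continuity of $f$ across their common boundaries, one verifies that this $\vec p$ lies in $\partial f(\vec a)$ and realises $g'(0,+)=(\vec v,\vec p)$, closing the argument. As in the one-dimensional lemma, the stated equality is understood for $t>0$ small enough that $\vec a+t\vec v$ remains on the affine piece determining $g'(0,+)$.
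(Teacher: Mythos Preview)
Your proposal is correct and follows the same approach as the paper: define $g_{\vec v}(t)=f(\vec a+t\vec v)$ and invoke the one-dimensional lemma. The paper's proof is in fact that single sentence, leaving implicit the points you spell out (that $g$ inherits convexity and piecewise linearity, and that $\max_{s\in\partial g(0)}s=\max_{\vec p\in\partial f(\vec a)}(\vec v,\vec p)$ via the directional-derivative formula for convex functions); your additional detail is sound and simply makes explicit what the paper takes for granted.
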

				\begin{proof}
					We only need to define $g_{\vec v}(t)=f(\vec a+t\vec v),\,\,\forall\, \vec v\in \mathbb{R}^n\setminus\{\vec 0\}$, and apply Lemma \ref{lemma:1-dim-PL-gradient} to $g_{\vec v}$.
				\end{proof}

				Applying the above theorem to the functions $I^+(\vec x)$ and $\|\vec x\|$, we obtain:

				\begin{corollary}
					Given $\vec a,\vec v\in \mathbb{R}^n\setminus \{\vec 0\}$, for sufficiently small $t>0$, we have
					\begin{align*}
						I^+(\vec  a+t\vec v)-I^+(\vec   a) &= t\cdot\max\limits_{\vec p\in\partial I^+(\vec   a)}\textcolor{black}{\langle\vec  p,\vec v\rangle}, \\
						\|\vec a+t\vec v\|-\|\vec a\| &=t\cdot\max\limits_{\vec p\in\partial \|\vec a\|}\textcolor{black}{\langle\vec p,\vec v\rangle}.
					\end{align*}
				\end{corollary}

				\begin{theorem}\label{th:pre_IP}
					Given $\vec a\in X$ and $\vec q\in\partial \|\vec a\|$,  there holds: (1) $\Rightarrow$ (2) $\Rightarrow$ (3), where the statements (1), (2) and (3) are claimed as follows.
					
					(1) $\vec a$ is not an eigenvector of $\Delta^+_1$.
					
					(2) $\min\limits_{\textcolor{black}{\vec x}\in \overline{B(\vec a,\delta)}}\left( I^+(\vec  x)-\lambda \textcolor{black}{\langle\vec q,\vec x\rangle}\right)<0$,
					where $\lambda=I^+(\vec  a)$, and $\delta$ can be taken as $$\min\left( \{ |a_{i}+a_{j}|,|a_i|:i,j=1,2,\cdots,n\}\backslash \{0\}\right).$$
					
					(3) $I^+(\vec x_0/\|\vec x_0\|)<I^+(\vec a)$, where
					\begin{equation}\label{eq:minsolve}
						\vec x_0\in \argmin \limits_{\textcolor{black}{\vec x}\in \overline{B(\vec a,\delta)}}\left( I^+(\vec  x)-\lambda \textcolor{black}{\langle\vec q,\vec x\rangle}\right).
					\end{equation}
				\end{theorem}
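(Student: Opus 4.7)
The plan is to study the convex piecewise linear function $F(\vec x):=I^+(\vec x)-\lambda(\vec q,\vec x)$ on a neighborhood of $\vec a$, since both implications concern the local behavior of $F$. The anchor identity is $F(\vec a)=0$: because $\|\cdot\|$ is positively $1$-homogeneous, applying the subgradient inequality at $\vec x=\vec 0$ and at $\vec x=2\vec a$ forces $(\vec q,\vec a)=\|\vec a\|=1$ for every $\vec q\in\partial\|\vec a\|$, so $F(\vec a)=\lambda-\lambda=0$. This is the value that (2) must strictly beat on the closed ball.

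For $(1)\Rightarrow(2)$, the first step is to translate the non-eigenvector hypothesis into non-stationarity of $F$ at $\vec a$. Using $\partial I^+(\vec a)=\Delta_1^+\vec a$ and $\partial\|\vec a\|=D\sgn(\vec a)$ together with Proposition~\ref{pro:Delta1-J(x)}, any eigenvalue attached to $\vec a$ would have to equal $\lambda$, so (1) entails $\lambda\vec q\notin\partial I^+(\vec a)$, i.e., $\vec 0\notin\partial F(\vec a)$. Convexity then implies that $\vec a$ is not a global minimizer of $F$, so some $\vec x^*$ has $F(\vec x^*)<0$. Convexity again yields $F((1-t)\vec a+t\vec x^*)\le tF(\vec x^*)<0$ for $t\in(0,1]$, and choosing $t$ small enough places the convex combination inside $\overline{B(\vec a,\delta)}$, giving (2).

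For $(2)\Rightarrow(3)$, I would first observe $\vec x_0\ne\vec 0$, since $F(\vec 0)=0$ is incompatible with a strictly negative minimum. Writing $\vec q=D\vec c$ with $|c_i|\le 1$ (which follows from $\vec q\in D\sgn(\vec a)$), a triangle-type bound gives
\[
(\vec q,\vec x_0)=\sum_{i} d_ic_ix_{0,i}\le\sum_{i} d_i|x_{0,i}|=\|\vec x_0\|.
\]
Combining this with $\lambda\ge 0$ and the strict inequality $I^+(\vec x_0)<\lambda(\vec q,\vec x_0)$ supplied by (2) yields $I^+(\vec x_0)<\lambda\|\vec x_0\|$. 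Dividing by $\|\vec x_0\|>0$ and using the positive $1$-homogeneity of $I^+$ then produces $I^+(\vec x_0/\|\vec x_0\|)<\lambda=I^+(\vec a)$, which is (3).

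The hardest part, I expect, is the translation step at the beginning of $(1)\Rightarrow(2)$: once $\vec 0\notin\partial F(\vec a)$ has been extracted from the definition of eigenpair, everything else is driven by convexity and $1$-homogeneity. The specific choice of $\delta$ (the smallest nonzero $|a_i|$ or $|a_i+a_j|$) is what keeps $F$ affine across $\overline{B(\vec a,\delta)}$, which will matter later when the local-linearity corollary is invoked to characterize $\vec x_0$ for the IP scheme; but for the implications stated here only the freedom to shrink the descent step is needed, so this aspect is bookkeeping rather than a genuine obstacle.
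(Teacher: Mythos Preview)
Your proposal is correct and follows essentially the same route as the paper: for $(1)\Rightarrow(2)$ both arguments hinge on the equivalence between the eigenvector condition and $\vec 0\in\partial F(\vec a)$ together with convexity of $F$ (the paper phrases it by contradiction---assuming a local minimum at $\vec a$ and deducing $\vec 0\in\partial F(\vec a)$---while you argue directly and then shrink along the segment to enter the ball), and for $(2)\Rightarrow(3)$ both use the bound $(\vec q,\vec x_0)\le\|\vec x_0\|$ coming from $\vec q\in D\sgn(\vec a)$ and the nonnegativity of $\lambda$. Your remark that the particular value of $\delta$ is irrelevant for the implications as stated (any $\delta>0$ suffices) is accurate and worth noting.
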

				\begin{proof}
					(1) $\Rightarrow$ (2):
					
					Let $\vec q\in\partial\|\vec a\|$ be fixed. We suppose the contrary, that $\vec a$ is not an eigenvector of $\Delta^+_1$, and $\min\limits_{\vec x\in \overline{B(\vec a,\delta)}}\left( I^+(\vec  x)-\lambda \textcolor{black}{\langle\vec q,\vec x\rangle}\right)\ge 0$. Note that
					$$
					\min\limits_{\vec x\in \overline{B(\vec a,\delta)}}\left( I^+(\vec  x)-\lambda \textcolor{black}{\langle\vec q,\vec x\rangle}\right)\le I^+(\vec  a)-\lambda\textcolor{black}{\langle\vec q,\vec x\rangle} =0,
					$$
					that is, \textcolor{black}{$\vec a$} is a local minimizer of the function $I^+(\vec  x)-\lambda \textcolor{black}{\langle\vec q,\vec x\rangle}$ in $B(\vec a,\delta)$. Namely, \textcolor{black}{$\vec a$} is a critical point of $I^+(\vec  x)-\lambda \textcolor{black}{\langle\vec q,\vec x\rangle}$, and then
					$$\vec 0\in\partial ( I^+(\vec  x)-\lambda\textcolor{black}{\langle\vec q,\vec x\rangle}) |_{\vec x=\vec a} =\partial I^+(\vec  a)-\lambda \vec q.$$
					Therefore, there \textcolor{black}{exist sub-differential components} $z_{ij}\in \sgn(a_i+a_j)$ and $q_i\in \sgn( a_i)$ such that $\sum_{\textcolor{black}{j:\{j,i\}\in E}}z_{ij}=\lambda d_iq_i$, $i=1,2,\cdots,n$, which means that $\vec a$ is an eigenvector of $\Delta^+_1$ and $\lambda$ is the corresponding eigenvalue. This is a contradiction.
					
					(2) $\Rightarrow$ (3):
					
					\textcolor{black}{Suppose that $\vec x_0$ satisfies {\color{black}Equation~}\eqref{eq:minsolve}, which implies}
					$$\left( I^+(\vec  x_0)-\lambda \textcolor{black}{\langle\vec q,\vec x_0\rangle}\right)\textcolor{black}{=} \min\limits_{\vec x\in \overline{B(\vec a,\delta)}}\left( I^+(\vec  x)-\lambda \textcolor{black}{\langle\vec q,\vec x\rangle}\right)<0.$$
					Since
					$$\textcolor{black}{\langle\vec q,\vec x_0\rangle}\in \sum^n_{i=1} d_i \sgn(a_i) (\vec x_0)_i\le \sum^n_{i=1} d_i |(\vec x_0)_i|=\|\vec x_0\|,$$
					it follows
					$$ I^+(\vec  x_0)-\lambda \|\vec x_0 \|\le I^+(\vec  x_0)-\lambda \textcolor{black}{\langle\vec q,\vec x_0\rangle}< 0,$$
					i.e., $I^+(\vec x_0/\|\vec x_0\|)<\lambda=I^+(\vec a)$. 
				\end{proof}
				
				\begin{remark}
					Let $G=(V,E)$ be the standard path graph on two vertices, i.e., $V=\{1,2\}$ and $E=\{\{1,2\}\}$. Then $I^+(\vec x)=|x_1+x_2|$ and $\|\vec x\|=|x_1|+|x_2|$. Let $\vec a=(1,0)\in X$ and $\vec q=(1,0)\in\partial \|\vec a\|$. Then $\vec a$ is an eigenvector corresponding to the largest $\Delta_1^+$-eigenvalue $\lambda=1$. That is, the claim (1) of Theorem \ref{th:pre_IP} doesn't hold.		
					Let $\vec a^\epsilon=(1-\epsilon,-\epsilon)$ with sufficiently small $\epsilon \in (0, \delta/2)$. 
					Then we have $I^+(\vec a^\epsilon)-\lambda\langle\vec q,\vec a^\epsilon\rangle=-\epsilon<0=I^+(\vec a)-\lambda\langle\vec q,\vec a\rangle$,
					thereby implying that  both claims (2) and (3) hold.  
				\end{remark}

				Furthermore, it is evident from the piecewise linearity of $I^+(\cdot)$ and $\langle\vec q,\cdot \rangle$ that
				\begin{equation}
					\label{eq:x_norm2}
					\begin{aligned}
						\min\limits_{\|\vec x\|_2\leq 1}\left(I^+(\vec x)-\lambda\langle\vec q,\vec x\rangle\right)&\leq I^+\left(\frac{\vec x_0}{\|\vec x_0\|_2}\right)-\lambda\langle\vec q,\frac{\vec x_0}{\|\vec x_0\|_2}\rangle\\
						&=\frac{1}{\|\vec x_0\|_2}\left( I^+\left(\vec x_0\right)-\lambda\langle\vec q,\vec x_0\rangle\right)<0
					\end{aligned}
				\end{equation}
				with a finite value of $\|\vec x_0\|_2\leq\|\vec a\|_2+\delta\leq \sum_{i\in V}1/d_i^2+\delta$, where $\vec x_0$ is selected from {\color{black}Equation~}\eqref{eq:minsolve}.
				
				According to {\color{black}Equation~}\eqref{eq:minsolve} in Theorem \ref{th:pre_IP} or {\color{black}Equation~}\eqref{eq:x_norm2}, we are able to directly propose an inverse power algorithm to approximate the smallest $\Delta^+_1$-eigenvalue, i.e., the dual Cheeger constant from {\color{black}Equation~}\eqref{eq:modified-min}.  
				We use the short name {\rm{\bf{IP}}} to denote the resulting algorithm the skeleton and local convergence of which are shown in Algorithm \ref{alg:IP_J} and 
				Theorem~\ref{th:cd1convergence}, respectively.  
				{\color{black} We remark here that the nonlinear version of  the inverse power method was first studied by Hein and B\"uhler (see Section 3 in \cite{HeinBuhler2010}).  We continue their naming of such methods, and thus  refer to Algorithm  \ref{alg:IP_J} as the inverse power algorithm and  abbreviate it as {\rm{\bf{IP}}}. 
				}

				\begin{algorithm}[t]
					\caption{\small The inverse power algorithm for approximating $\mu_1^+ = 1-h^+(G)$.}
					\label{alg:IP_J}
					\begin{algorithmic}[1]
						\State{\textbf{Input:} $\vec x^0\in X$ and $\lambda^0= I^+(\vec x^0)$.}
						\State{\textbf{Output:} the eigenvalue $\lambda^{k+1}$ and the eigenvector $\vec x^{k+1}$.}
						\State{Set $k=0$}
						\While{
							$\frac{|\lambda^{k+1}-\lambda^k|}{|\lambda^k|}\ge\varepsilon$}
						\State{
							Choose $\vec v^{k}\in\partial\|\vec x^{k}\|$}
						
						\State{Solve $\vec x^{k+1} = \argmin\limits_{\|\vec x\|_2\leq 1} I^+(\vec x) -
							\textcolor{black}{\lambda^k}\textcolor{black}{\langle\vec v^k,\vec x\rangle}$}
						
						\State{Set $\vec x^{k+1}=\frac{\vec x^{k+1}}{\|\vec x^{k+1}\|}$}
						
						\State{Set $\lambda^{k+1}=I^+(\vec x^{k+1})$}
						
						\State{Set $k \gets k+1$}
						\EndWhile
					\end{algorithmic}
				\end{algorithm}

				\begin{theorem}\label{th:cd1convergence}
					The sequence $\{I^+(\vec x^k)\}$ produced by {\rm{\bf{IP}}} is decreasingly convergent to an eigenvalue of $\Delta_1^+$. Furthermore, the sequence $\{\vec x^k\}$ produced by {\rm{\bf{IP}}} converges to an eigenvector of $\Delta^+_1$ with eigenvalue $\lambda^*\in [1-h^+(G), I^+(\vec x^0)]$.
				\end{theorem}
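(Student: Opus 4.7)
My plan is to split the conclusion into three parts and attack them in the following order: monotonicity of $\{\lambda^k\}$, convergence of $\{\lambda^k\}$ to a limit $\lambda^\star$ in the advertised range, and identification of any accumulation point of $\{\vec x^k\}$ as an eigenvector associated with $\lambda^\star$.

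For monotonicity, I would first check that $\vec x^k$ is feasible for the $k$-th subproblem, i.e.\ $\|\vec x^k\|_2 \leq 1$. Since $d_i|x^k_i| \leq \sum_j d_j|x^k_j| = \|\vec x^k\| = 1$ and (assuming no isolated vertices) $d_i \geq 1$, one has $|x^k_i|^2 \leq d_i|x^k_i|$, hence $\sum_i(x^k_i)^2 \leq \sum_i d_i|x^k_i| = 1$. By Euler's identity for positively $1$-homogeneous convex functions, $(\vec v^k,\vec x^k) = \|\vec x^k\| = 1$, so the subproblem's objective equals $\lambda^k - \lambda^k = 0$ at $\vec x^k$. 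Therefore the unnormalized minimizer $\tilde{\vec x}^{k+1}$ satisfies $I^+(\tilde{\vec x}^{k+1}) \leq \lambda^k(\vec v^k,\tilde{\vec x}^{k+1})$, which combined with $(\vec v^k,\tilde{\vec x}^{k+1}) \leq \sum_i d_i|\tilde x^{k+1}_i| = \|\tilde{\vec x}^{k+1}\|$ (from $|v^k_i|\leq d_i$) and the subsequent normalization yields $\lambda^{k+1} = I^+(\tilde{\vec x}^{k+1})/\|\tilde{\vec x}^{k+1}\| \leq \lambda^k$.

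For the existence of the limit, Theorem~\ref{th:1-eigen} gives $\lambda^k = I^+(\vec x^k) \geq \mu^+_1 = 1-h^+(G)$ for every $k$, so $\lambda^k \searrow \lambda^\star \in [1-h^+(G),\lambda^0]$. Compactness of $X$ then delivers a subsequential limit $\vec x^{k_j}\to\vec x^\star$ with $I^+(\vec x^\star)=\lambda^\star$, and by further extracting along the bounded sequence $\{\vec v^{k_j}\}$ (each $|v^{k_j}_i|\leq d_i$) we may assume $\vec v^{k_j}\to\bar{\vec v}$; the upper semicontinuity of the subdifferential of the convex function $\|\cdot\|$ places $\bar{\vec v}\in\partial\|\vec x^\star\|$. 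To show $\vec x^\star$ is an eigenvector, I argue by contradiction using Theorem~\ref{th:pre_IP}. If it were not, then applying Theorem~\ref{th:pre_IP} with the specific pair $(\vec a,\vec q) = (\vec x^\star,\bar{\vec v})$ would give $\delta>0$ and $\vec y\in\overline{B(\vec x^\star,\delta)}$ with $\|\vec y\|_2\leq 1$ and $I^+(\vec y)-\lambda^\star(\bar{\vec v},\vec y) \leq -c < 0$. Since $\vec v^{k_j}\to\bar{\vec v}$ and $\lambda^{k_j}\to\lambda^\star$, continuity gives $I^+(\vec y)-\lambda^{k_j}(\vec v^{k_j},\vec y) \leq -c/2$ for large $j$, and as $\vec y$ is feasible in the $k_j$-th subproblem the attained minimum is also at most $-c/2$. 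Combined with the uniform upper bound $\|\tilde{\vec x}^{k_j+1}\|\leq (\sum_i d_i^2)^{1/2}$ coming from $\|\tilde{\vec x}^{k_j+1}\|_2\leq 1$, this forces $\lambda^{k_j+1}\leq \lambda^{k_j}-c'$ for a fixed $c'>0$, contradicting $\lambda^{k_j+1}\to\lambda^\star$. Hence $\vec x^\star$ is a $\Delta_1^+$-eigenvector with eigenvalue $\lambda^\star \in [1-h^+(G),\, I^+(\vec x^0)]$.

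The main obstacle is precisely this last step. One cannot simply invoke Theorem~\ref{th:pre_IP} with an arbitrary $\vec q \in \partial\|\vec x^\star\|$, because the algorithm has already committed to a sequence $\{\vec v^k\}$ whose limit might disagree with $\vec q$ on coordinates where $x^\star_i = 0$ (the subdifferential $\partial\|\cdot\|$ is set-valued there, with the free component ranging over $[-d_i,d_i]$). The remedy is to first extract $\bar{\vec v}$ as the limit of the algorithm's own dual vectors and only then apply Theorem~\ref{th:pre_IP} with $\vec q = \bar{\vec v}$, so that the continuity of $(\vec v^{k_j},\vec y)$ transports the strict decrease at $\vec x^\star$ back to the iterates. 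Full convergence of $\{\vec x^k\}$ (as opposed to subsequential) may require an additional argument, e.g.\ showing $\|\vec x^{k+1}-\vec x^k\|\to 0$ so that the accumulation set is connected and thus coincides with a single eigenvector.
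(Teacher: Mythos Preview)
Your argument is correct and follows essentially the same route as the paper: extract a subsequential limit $\vec x^\star$, pass to a further subsequence so that $\vec v^{k_j}\to\bar{\vec v}\in\partial\|\vec x^\star\|$ via the closed-graph property of the subdifferential, then invoke Theorem~\ref{th:pre_IP} with the pair $(\vec x^\star,\bar{\vec v})$ to derive a uniform decrease $\lambda^{k_j+1}\le\lambda^{k_j}-c'$ and reach a contradiction. The only cosmetic difference is that the paper packages the contradiction through the value function $g(\lambda,\vec v)=\min_{\|\vec x\|_2\le 1}\bigl(I^+(\vec x)-\lambda(\vec v,\vec x)\bigr)$ and its continuity on the compact set $[0,1]\times\prod_i[-d_i,d_i]$, whereas you fix a single witness point $\vec y$ and pass the inequality to finite $j$; these are equivalent. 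Your monotonicity step is more explicit than the paper's (which simply asserts $I^+(\vec x^{k+1})\le I^+(\vec x^k)$), and your caveat at the end is well placed: the paper's own proof, like yours, only establishes subsequential convergence of $\{\vec x^k\}$, so the stronger ``converges to an eigenvector'' phrasing in the statement is not fully justified there either. One small point to make precise: Theorem~\ref{th:pre_IP} produces $\vec y\in\overline{B(\vec x^\star,\delta)}$ without the constraint $\|\vec y\|_2\le 1$, but since $I^+(\vec x)-\lambda^\star(\bar{\vec v},\vec x)$ is positively $1$-homogeneous you may rescale $\vec y$ into the unit ball while keeping the objective strictly negative.
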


				\begin{proof}
					It can be easily shown that $\mu^+_1\le I^+(\vec x^{k+1})\le I^+(\vec x^k)$. So there exists $\lambda^*\in [\mu^+_1, I^+(\vec x^0)]$ such that $\{I^+(\vec x^k)\}$ converges to $\lambda^*$ decreasingly. Next we prove that $\lambda^*$ is a $\Delta^+_1$-eigenvalue.

					Denote
					\[g(\lambda,\vec v)=\min\limits_{\|\vec x\|_2\le1} I^+(\vec x)-\lambda\textcolor{black}{\langle\vec v,\vec x\rangle}.\]
					It is easy to see that $g(\lambda,\vec v)$ is uniformly continuous on $[0,1]\times \prod_{i=1}^n[-d_i,d_i]$ since $\{\vec x\in \mathbb{R}^n:\|\vec x\|_2\le1\}$ and $[0,1]\times \prod_{i=1}^n[-d_i,d_i]$ are compact.
					
					It follows from $\vec x^k\in X$ and $X$ is compact that there exist $\vec x^*\in X$ and a subsequence $\{\vec x^{k_i}\}$ of $\{\vec x^k\}$ such that $\lim_{i\to+\infty}\vec x^{k_i}=\vec x^*$. For simplicity, We may assume without loss of generality that $\{\vec x^{k_i}\}=\{\vec x^k\}$, that is, $\lim_{k\to+\infty}\vec x^k=\vec x^*$.
					According to the upper semi-continuity of the subdifferential, $\forall\,\epsilon>0, \,\exists\, \delta>0$ such that
					$\partial \|\vec x\|\subset (\partial \|\vec x^*\|)_\epsilon,$ the $\epsilon$ neighborhood of the subset $\partial \|\vec x^*\|$, $\forall\,\vec x\in B(\vec x^*,\delta)$. So there exists $N>0$ such that $\partial \|\vec x^k\|\subset (\partial \|\vec x^*\|)_\epsilon$ whenever $k> N$. Thus, $\vec v^k\in \partial (\|\vec x^*\|)_\epsilon$ for any $k > N$, which means that there is a convergent subsequence of $\{\vec v^k\}$ (note that $\partial \|\vec x^*\|$ is compact), still denoted it by $\{\vec v^k\}$ for simplicity. Then $\vec v^k\to \vec v^*$ for some $\vec v^*\in \partial \|\vec x^*\|$. Hence, according to the continuity of $g$, we have $$g(\lambda^*,\vec v^*)=\lim\limits_{k\to+\infty}g(\lambda^k,\vec v^k).$$
					
					Note that
					\[I^+(\vec x^*)=\lim\limits_{k\to+\infty} I^+(\vec x^k)=\lim\limits_{k\to+\infty}\lambda^k=\lambda^*.\]
					Suppose the contrary, that $\lambda^*$ is not an eigenvalue, then $\vec x^*$ is not an eigenvector and so by Theorem \ref{th:pre_IP} \textcolor{black}{and {\color{black}Equation~}\eqref{eq:x_norm2}}, we have
					\[
					g(\lambda^*,\vec v^*)=\min\limits_{\|\vec x\|_2\le1}I^+(\vec x)-\textcolor{black}{\lambda^*\langle\vec v^*,\vec x\rangle}<0,
					\]
					which implies that $g(\lambda^k,\vec v^k)<-\epsilon^*$
					for sufficiently large $k$ and some $\epsilon^*>0$. Therefore
					\[
					I^+(\vec x^{k+1})-\lambda^k\|\vec x^{k+1}\|\le I^+ (\vec x^{k+1})-\lambda^k\textcolor{black}{\langle\vec v^k,\vec x^{k+1}\rangle}=g(\lambda^k,\vec v^k)<-\epsilon^*,
					\]
					and then
					$$
					\lambda^{k+1}-\lambda^k=\frac{ I^+(\vec x^{k+1})}{\|\vec x^{k+1}\|}-\lambda^k<-\epsilon^*/M,
					$$
					which follows that $\lim_{k\to+\infty}\lambda^k=-\infty$, where $M$ is a given positive constant satisfying $\|\vec x\|/M\le \|\vec x\|_2\le \|\vec x\|$. This is a contradiction and then we have finished the proof.
				\end{proof}

				Up to now, {\rm{\bf{IP}}} is the first and unique algorithm for solving the dual Cheeger problem to the best of our knowledge. 	
				The feasibility of {\rm{\bf{IP}}} reflects intrinsically the local linearity of $I^+(\vec x)$ along a given direction, and 
				the same local analysis is applicable to the Cheeger problem \cite{ChangShaoZhang2015},
				as well as to the modified dual Cheeger problem \eqref{eq:modified-min}. 
				We use the short name $\widehat{\rm \bf{IP}}$ to denote the inverse power algorithm for approximating $\widehat{\mu}^+_1 = 1- \widehat{h}^+(G)$,
				which is similar to {\rm{\bf{IP}}} shown in Algorithm \ref{alg:IP_J} except for the replacement of the $5$-th and $8$-th steps with $\vec v^k\in\partial I(\vec x^k)$ and $\lambda^{k+1}=I^+(\vec x^{k+1})/(I^+(\vec x^{k+1})+I(\vec x^{k+1}))$, respectively. Here the selection of sub-differential is in the same spirit that the simple iterative algorithm solves the maxcut problem \cite{CSZZ-maxcut18}. In addition, the convergence analysis of $\widehat{\rm \bf{IP}}$ is analogous to that of {\rm{\bf{IP}}}, and the following theorem establishes the assertion.

				



				\begin{theorem}\label{th:modified-cd1convergence}
					
					The sequence $\{I^+(\vec x^k)/\left(I^+(\vec x^k)+I(\vec x^k)\right)\}$ generated by $\widehat{\rm \bf{IP}}$ decreasingly converges to an eigenvalue of $\widehat{\Delta}_1^+$.  Moreover, the sequence $\{\vec x^k\}$ produced by $\widehat{\rm \bf{IP}}$ converges to a $\widehat{\Delta}_1^+$-eigenvector with the corresponding eigenvalue $\lambda^*\in [1-\widehat{h}^+(G), I^+(\vec x^0)/\left(I^+(\vec x^0)+I(\vec x^0)\right)]$.
				\end{theorem}
				
				{\color{black}
					\begin{remark}We  must emphasise that Algorithm  \ref{alg:IP_J} does not require the design of  additional rounding strategies. 
						In fact, Theorem \ref{th:cd1convergence} states that Algorithm  \ref{alg:IP_J} ensures convergence to an eigenpair $(\mu,\vec x)$ of $\Delta_1^+$-eigenproblem, and Lemma \ref{lem:eigen1} further indicates that the eigenvector $\vec x$ shares the same eigenvalue $\mu$ with a ternary vector $\vec y$ defined by
						$$y_i=\left\{
						\begin{aligned}
							& 1,&\text{if }x_i>0,\\
							& 0, &\text{if }x_i=0,\\
							&-1,&\text{if }x_i<0,
						\end{aligned}
						\right.\;\; i=1,\cdots,n,$$
						which directly corresponds to a three cut, eliminating the need for designing additional rounding techniques. Similarly, the rounding-free property for the modified dual Cheeger constant follows from the combination of Theorem \ref{th:modified-cd1convergence} and Lemma \ref{lem:eigen1}. 
					\end{remark}	
				}
				
				\section{Recursive spectral cut algorithms for maxcut}
				\label{sec:maxcut}

				\begin{figure}[htbp]
					\centering
					\begin{tikzpicture}[>=triangle 60]
						\matrix[matrix of math nodes,column sep={42pt,between origins},row
						sep={42pt,between origins},nodes={asymmetrical rectangle}] (s)
						{
							& & &|[name=space1]| & & & &|[name=space3]|\\
							|[name=02]| G_t &|[name=A']| \vec x^{[t]} & &|[name=B']| \vec y^{[t]} & & &|[name=lt]| L_t & |[name=03]| G_{t+1} & |[name=rt]| R_t\\
							& & &|[name=space2]|& & & &|[name=space4]|\\
						};
						\draw[->] (02) edge (A')
						(A') edge node[anchor=south] {\(\rm \bf{2TSC} ~ (\bf{IP})\)} (B')  
						(A') edge node[anchor=north] {\(\rm \widehat{\bf{2TSC}} ~ (\widehat{\bf{IP}})\)} (B')      
						(space3.center) edge (lt)
						(space3.center) edge (rt)
						(space3.center) edge (03)
						(space4.center) edge (lt)
						(space4.center) --++ (rt)
						;
						\draw[-]  (B') edge (space1.center) 
						(space1.center) edge node[auto] {\(\widehat{r} > \frac{1}{2}:G_{t+1}\leftarrow G_t[V_{t+1}]\)}(space3.center)
						(B') edge (space2.center) 
						(space2.center) edgenode[auto] {\(\widehat{r} \leq \frac{1}{2}:\text{greedy algorithm}\)} (space4.center)
						;          
					\end{tikzpicture}
					\caption{Flowchart of the $t$-th RSC iteration for maxcut. Here $\vec x^{[t]}$ is the maximal eigenvector of the graph Laplacian $\Delta_2(G_t)$ on the residual graph $G_t=(V_t, E_t)$,  $\vec y^{[t]}\in \{-1,0,1\}^{|V_t|}$ gives a ternary vector, and $\widehat{r}$ is the recursive stopping indicator defined in {\color{black}Equation~}\eqref{ratio:modified_dual_cheeger}. The original RSC adopts the rounding procedure, {\rm \textbf{2TSC}} \cite{Trevisan2012} or {\rm $\widehat{\textbf{2TSC}}$} \cite{Soto2015}, which accepts $\vec x^{[t]}$ as input and outputs $\vec y^{[t]}$. In order to develop an enhanced RSC, this work replaces {\rm \textbf{2TSC}} (resp.~{\rm $\widehat{\textbf{2TSC}}$}) with {\rm{\bf{IP}}} (resp.~$\widehat{\rm \bf{IP}}$). When RSC stops at the $N$-iteration,  
						a series of partitions, $(L_1,R_1)$, $\ldots$, $(L_N,R_N)$, are obtained and can be used to form an approximate solution for maxcut. }
					\label{fig:rsc-t}
				\end{figure}
				

				%



				Given $G=(V,E)$, the dual Cheeger constant $h^+(G)$ in {\color{black}Equation~}\eqref{eq:dual-Cheeger-constant} can be regarded as the ``three'' cut version for  
				$h_{\max}(G)$ in {\color{black}Equation~}\eqref{eq:maxcut0}, where the existence of the ``uncut part'' $(V_1\cup V_2)^c$ may provide a big opportunity for solving the maxcut  in a recursive manner. This can also been seen by comparing {\color{black}Equation~}\eqref{eq:dual-Cheeger-constant} with {\color{black}Equation~}\eqref{eq:maxcut0} which tells that 
				$h^+(G)$ and $h_{\max}(G)$ share the same objective function,  but exploit different feasible domains:  $\{-1,0,1\}^{|V|}\backslash \{\vec 0\}$ for the former and
				$\{-1,1\}^{|V|}$ for the latter. Indeed, Trevisan seized this opportunity and proposed the recursive spectral cut (RSC) algorithm for maxcut \cite{Trevisan2012}. 
				Considering the $t$-th iteration as shown in {\color{black}Figure~}\ref{fig:rsc-t}, let $G_t=(V_t,E_t)$ be the residual graph, $|E_t|$ the number of edges, and $\vol_t$  the volume function acting on subset of $V_t$ with respect to $G_t$. It can be readily found that any ternary vector $\vec y^{[t]}\in \{-1,0,1\}^{|V_t|}$ naturally yields a ternary cut $(L_t,R_t,V_{t+1})$ with $L_t=\{i\in V_t:\,y_i^{[t]}=1\}$ and $R_t=\{i\in V_t:\,y_i^{[t]}=-1\}$ 
				being the sets of partitioned vertices, and $V_{t+1}=\{i\in V_t:\,y_i^{[t]}=0\}$ the set of unpartitioned ones.  After restricting the piecewise linear functions $I^+(\cdot)$, $I(\cdot)$ and $\|\cdot\|$ on the subgraph $G_t$, we have
				\begin{align}
					C^{[t]}: &= |E_t(L_t,R_t)|=\frac{1}{2}\left(\|\vec y^{[t]}\|-I^+(\vec y^{[t]})\right),\label{eq:ct} \\
					M^{[t]}: &= \frac{1}{2}\left(\vol_t(L_t\cup R_t)+|E_t(L_t\cup R_t,V_{t+1})|\right)=\frac{1}{2}\left(I^+(\vec y^{[t]})+I(\vec y^{[t]})\right),\label{eq:mt} \\
					X^{[t]}: &= |E_t(L_t\cup R_t,V_{t+1})|=I^+(\vec y^{[t]})+I(\vec y^{[t]})-\|\vec y^{[t]}\|,\label{eq:xt}\\
					r\left(\vec	y^{[t]}\right):&=\frac{C^{[t]}}{M^{[t]}-\frac{1}{2}X^{[t]}}=1-\frac{I^+(\vec y^{[t]})}{\|\vec y^{[t]}\|},\label{ratio:dual_cheeger}\\
					\widehat{r}\left(\vec y^{[t]}\right):&=\frac{C^{[t]}+\frac{1}{2}X^{[t]}}{M^{[t]}}=1-\frac{I^+(\vec y^{[t]})}{I^+(\vec y^{[t]})+I(\vec y^{[t]})}.\label{ratio:modified_dual_cheeger}
				\end{align}
				
				The remaining task is to determine the ternary vector $\vec y^{[t]}$ and it is natural to ask such $\vec y^{[t]}$ to solve the dual Cheeger problem $h^+(G_t)$ on the residual graph $G_t$. If we could do it, we will have an optimal RSC algorithm (denoted by \textbf{OPT}-RSC) in some sense. However, this is not realistic at current stage because obtaining an optimal solution to the dual Cheeger problem is NP-hard. 
				Considering the fact that the maximal eigenvalue of the graph Laplacian is strongly connected to the dual Cheeger constant and maxcut (see Theorem~\ref{th:1-ieq} and Remark~\ref{remark:maxcut}), 
				Trevisan adopted $\vec x^{[t]}$, the maximal eigenvector of the graph Laplacian $\Delta_2(G_t)$, as initial data, and then proposed 
				a rounding procedure named 2-Thresholds Spectral Cut (\textbf{2TSC}) to reach an approximate solution of $h^+(G_t)$ \cite{Trevisan2012}. 
				The procedure of \textbf{2TSC} can be briefed as follows.  For any threshold  $\tau\in [0,\max_{i\in V_t}(x_i^{[t]})^2]$, a $\tau$-related ternary vector $\vec y^{[t]}$ is generated with each component $y_i^{[t]}$ being $-1$ in the case of $x_i^{[t]}<-\sqrt{\tau}$, $0$ in the case of $|x_i^{[t]}|\leq\sqrt{\tau}$ and $1$ in the case of $x_i^{[t]}>\sqrt{\tau}$;  
				after that \textbf{2TSC} outputs a ternary vector, still denoted by $\vec y^{[t]}$, that maximizes the objective function $r$ (see {\color{black}Equation~}\eqref{ratio:dual_cheeger}) in the set of all $\tau$-related vectors, whose size is no more than $|V_t|$. We use \textbf{2TSC}-RSC to denote the resulting RSC algorithm. 
				In order to evaluate the quality of $\vec y^{[t]}$, a recursive stopping indicator, denoted by $\widehat{r}$ defined in {\color{black}Equation~}\eqref{ratio:modified_dual_cheeger}, is employed (see {\color{black}Figure~}\ref{fig:rsc-t}): RSC continues to the next iteration on the induced graph $G_{t+1}\leftarrow G_t[V_{t+1}]$ if $\widehat{r}(\vec y^{[t]})>\frac{1}{2}$; otherwise it replaces the ternary cut $(L_t,R_t,V_{t+1})$ with a binary cut $(L_t,R_t)$ that cuts more than $\frac{1}{2} |E_t|$ edges by a greedy algorithm
				and then terminates the recursion. In a word, RSC produces a series of ``good partitions'' $(L_1,R_1)$, $\ldots$, $(L_N,R_N)$ with $N$ being the number of iterations, and combines them to form an approximate solution for maxcut in a greedy manner.

				The reason of choosing $\widehat{r}$ in {\color{black}Equation~}\eqref{ratio:modified_dual_cheeger} is that the number of removing edges are $M^{[t]}$ (see {\color{black}Equation~}\eqref{eq:mt}) at the $t$-th iteration if $G_t\rightarrow G_{t+1}$, and at least the number of $C^{[t]}+\frac{1}{2}X^{[t]}$ (see {\color{black}Equations~}\eqref{eq:ct} and \eqref{eq:xt}) edges will be included in the final cut, thereby implying that a larger value of $\widehat{r}$ suggests a greater chance for obtaining an improved approximate solution for maxcut. A subtle deviation emerges accordingly in \textbf{2TSC}-RSC which adopts a different objective function $r$ (see {\color{black}Equation~}\eqref{ratio:dual_cheeger}) in the partition phase. In fact, the maximization of $\widehat{r}$ and $r$ in $\{-1,0,1\}^{|V_t|}\backslash \{\vec 0\}$ gives nothing but $\widehat{h}^+(G_t)$ and $h^+(G_t)$, respectively, as well as that in $\{-1,1\}^{|V_t|}$ yields $h_{\max}(G_t)$. Soto corrected such deviation and used the same objective function $\widehat{r}$ in both partition and evaluation phases, thereby resulting into a better lower bound of the worst-case approximation ratio for maxcut  than Trevisan's \cite{Soto2015}. Similarly, we use $\widehat{\textbf{OPT}}$-RSC to denote the extreme case the optimal solution of $\widehat{h}^+(G_t)$ is used and $\widehat{\textbf{2TSC}}$-RSC for the case the approximate solution of $\widehat{h}^+(G_t)$ is obtained through the 2-Thresholds rounding procedure. 
				
				Our main contribution to the RSC algorithm for maxcut is that we replace $\widehat{\textbf{2TSC}}$ (resp.~\textbf{2TSC}) with the proposed $\widehat{\textbf{IP}}$  (resp.~\textbf{IP}) that generates a ternary eigenvector of $\widehat{\Delta}_1^+$ (resp.~$\Delta_1^+$) that approximates $\widehat{h}^+(G_t)$ (resp.~$h^+(G_t)$) as shown in Theorems~\ref{th:1-eigen}, \ref{th:cd1convergence} and \ref{th:modified-cd1convergence}. The resulting algorithms are named $\widehat{\textbf{IP}}$-RSC and  \textbf{IP}-RSC, respectively.

				\subsection{Numerical results}

				\begin{table}[htbp]
					\centering
					\caption{Approximate values to $h^+(G)$ (resp.~$\widehat{h}^+(G)$) achieved by {\rm \textbf{2TSC}} and {\rm \textbf{IP}} (resp.~{\rm $\widehat{\textbf{2TSC}}$} and {\rm $\widehat{\textbf{IP}}$}) both of which start from the same maximal eigenvector of the graph Laplacian.}
					\label{tab:dc-mdc}
						\setlength{\tabcolsep}{4mm}{
							\begin{tabular}{c|c|c|cc||cc}
								\hline\hline
								\multirow{4}{*}{Graph} & \multirow{4}{*}{$|V|$} & \multirow{4}{*}{$|E|$} & \multicolumn{2}{c||}{\multirow{2}{*}{$h^+(G)$}}                   & \multicolumn{2}{c}{\multirow{2}{*}{$\widehat{h}^+(G)$}}                             \\
								&                        &                        & \multicolumn{2}{c||}{}                                            & \multicolumn{2}{c}{}                                                                \\ \cline{4-7} 
								&                        &                        & \multicolumn{1}{c|}{\multirow{2}{*}{\textcolor{black}{\textbf{2TSC}}}} & \multirow{2}{*}{\textbf{IP}} & \multicolumn{1}{c|}{\multirow{2}{*}{\textcolor{black}{$\widehat{\textbf{2TSC}}$}}} & \multirow{2}{*}{$\widehat{\textbf{IP}}$} \\
								&                        &                        & \multicolumn{1}{c|}{}                      &                     & \multicolumn{1}{c|}{}                      &                                        \\ \hline
								G1                     & 800                    & 19176                  & \multicolumn{1}{c|}{0.5852}                & 0.5852              & \multicolumn{1}{c|}{0.5856}                & 0.5998                                 \\ \hline
								G2                     & 800                    & 19176                  & \multicolumn{1}{c|}{0.5884}                & 0.5884              & \multicolumn{1}{c|}{0.5889}                & 0.5987                                 \\ \hline
								G3                     & 800                    & 19176                  & \multicolumn{1}{c|}{0.5892}                & 0.5892              & \multicolumn{1}{c|}{0.5903}                & 0.6026                                 \\ \hline
								G4                     & 800                    & 19176                  & \multicolumn{1}{c|}{0.5881}                & 0.5881              & \multicolumn{1}{c|}{0.5891}                & 0.6023                                 \\ \hline
								G5                     & 800                    & 19176                  & \multicolumn{1}{c|}{0.5929}                & 0.5929              & \multicolumn{1}{c|}{0.5932}                & 0.6040                                 \\ \hline
								G14                    & 800                    & 4694                   & \multicolumn{1}{c|}{0.6155}                & 0.6161              & \multicolumn{1}{c|}{0.6155}                & 0.6349                                 \\ \hline
								G15                    & 800                    & 4661                   & \multicolumn{1}{c|}{0.5945}                & 0.5988              & \multicolumn{1}{c|}{0.6001}                & 0.6325                                 \\ \hline
								G16                    & 800                    & 4672                   & \multicolumn{1}{c|}{0.6081}                & 0.6128              & \multicolumn{1}{c|}{0.6088}                & 0.6374                                 \\ \hline
								G17                    & 800                    & 4667                   & \multicolumn{1}{c|}{0.6141}                & 0.6186              & \multicolumn{1}{c|}{0.6154}                & 0.6390                                 \\ \hline
								G22                    & 2000                   & 19990                  & \multicolumn{1}{c|}{0.6441}                & 0.6441              & \multicolumn{1}{c|}{0.6453}                & 0.6611                                 \\ \hline
								G23                    & 2000                   & 19990                  & \multicolumn{1}{c|}{0.6412}                & 0.6412              & \multicolumn{1}{c|}{0.6417}                & 0.6609                                 \\ \hline
								G24                    & 2000                   & 19990                  & \multicolumn{1}{c|}{0.6416}                & 0.6416              & \multicolumn{1}{c|}{0.6419}                & 0.6588                                 \\ \hline
								G25                    & 2000                   & 19990                  & \multicolumn{1}{c|}{0.6394}                & 0.6394              & \multicolumn{1}{c|}{0.6400}                & 0.6582                                 \\ \hline
								G26                    & 2000                   & 19990                  & \multicolumn{1}{c|}{0.6379}                & 0.6387              & \multicolumn{1}{c|}{0.6396}                & 0.6577                                 \\ \hline
								G35                    & 2000                   & 11778                  & \multicolumn{1}{c|}{0.6108}                & 0.6148              & \multicolumn{1}{c|}{0.6114}                & 0.6363                                 \\ \hline
								G36                    & 2000                   & 11766                  & \multicolumn{1}{c|}{0.6055}                & 0.6070              & \multicolumn{1}{c|}{0.6081}                & 0.6351                                 \\ \hline
								G37                    & 2000                   & 11785                  & \multicolumn{1}{c|}{0.6077}                & 0.6106              & \multicolumn{1}{c|}{0.6080}                & 0.6353                                 \\ \hline
								G38                    & 2000                   & 11779                  & \multicolumn{1}{c|}{0.6046}                & 0.6100              & \multicolumn{1}{c|}{0.6053}                & 0.6347                                 \\ \hline
								G43                    & 1000                   & 9990                   & \multicolumn{1}{c|}{0.6401}                & 0.6416              & \multicolumn{1}{c|}{0.6410}                & 0.6594                                 \\ \hline
								G44                    & 1000                   & 9990                   & \multicolumn{1}{c|}{0.6445}                & 0.6445              & \multicolumn{1}{c|}{0.6445}                & 0.6603                                 \\ \hline
								G45                    & 1000                   & 9990                   & \multicolumn{1}{c|}{0.6370}                & 0.6378              & \multicolumn{1}{c|}{0.6379}                & 0.6588                                 \\ \hline
								G46                    & 1000                   & 9990                   & \multicolumn{1}{c|}{0.6395}                & 0.6395              & \multicolumn{1}{c|}{0.6397}                & 0.6567                                 \\ \hline
								G47                    & 1000                   & 9990                   & \multicolumn{1}{c|}{0.6359}                & 0.6359              & \multicolumn{1}{c|}{0.6370}                & 0.6570                                 \\ \hline
								G51                    & 1000                   & 5909                   & \multicolumn{1}{c|}{0.6169}                & 0.6174              & \multicolumn{1}{c|}{0.6174}                & 0.6350                                 \\ \hline
								G52                    & 1000                   & 5916                   & \multicolumn{1}{c|}{0.6161}                & 0.6195              & \multicolumn{1}{c|}{0.6166}                & 0.6403                                 \\ \hline
								G53                    & 1000                   & 5914                   & \multicolumn{1}{c|}{0.6139}                & 0.6172              & \multicolumn{1}{c|}{0.6141}                & 0.6351                                 \\ \hline
								G54                    & 1000                   & 5916                   & \multicolumn{1}{c|}{0.6178}                & 0.6192              & \multicolumn{1}{c|}{0.6185}                & 0.6386                                 \\ \hline\hline
							\end{tabular}
						}
					\end{table}

							\begin{table}[htbp]
								\centering
								\caption{Approximate values to $h_{\max}(G)$ achieved by {\rm \textbf{2TSC}-RSC}, {\rm \textbf{IP}-RSC}, {\rm $\widehat{\textbf{2TSC}}$-RSC} and {\rm $\widehat{\textbf{IP}}$-RSC}.}
								\label{tab:maxcut}
								\setlength{\tabcolsep}{3mm}{
										\begin{tabular}{c|c|c||c|c}
											\hline\hline
											\multirow{2}{*}{Graph} & \multirow{2}{*}{\textbf{2TSC}-RSC} & \multirow{2}{*}{\textbf{IP}-RSC} & \multirow{2}{*}{$\widehat{\textbf{2TSC}}$-RSC} & \multirow{2}{*}{$\widehat{\textbf{IP}}$-RSC} \\
											&                               &                               &                                                   &                                              \\ \hline
											G1                     & 11221                         & 11221                                             & 11262                         & 11501                                        \\ \hline
											G2                     & 11283                         & 11283                                             & 11304                         & 11481                                        \\ \hline
											G3                     & 11298                         & 11298                                             & 11339                         & 11556                                        \\ \hline
											G4                     & 11278                         & 11278                                             & 11322                         & 11548                                        \\ \hline
											G5                     & 11370                         & 11370                                             & 11378                         & 11582                                        \\ \hline
											G14                    & 2889                          & 2892                                              & 2889                          & 2980                                         \\ \hline
											G15                    & 2771                          & 2791                                              & 2840                          & 2948                                         \\ \hline
											G16                    & 2841                          & 2863                                              & 2847                          & 2979                                         \\ \hline
											G17                    & 2866                          & 2887                                              & 2894                          & 2984                                         \\ \hline
											G22                    & 12876                         & 12876                                             & 12935                         & 13215                                        \\ \hline
											G23                    & 12817                         & 12817                                             & 12945                         & 13212                                        \\ \hline
											G24                    & 12826                         & 12826                                             & 12873                         & 13170                                        \\ \hline
											G25                    & 12781                         & 12781                                             & 12830                         & 13157                                        \\ \hline
											G26                    & 12752                         & 12767                                             & 12876                         & 13148                                        \\ \hline
											G35                    & 7194                          & 7241                                              & 7219                          & 7492                                         \\ \hline
											G36                    & 7124                          & 7142                                              & 7213                          & 7473                                         \\ \hline
											G37                    & 7162                          & 7196                                              & 7259                          & 7487                                         \\ \hline
											G38                    & 7122                          & 7185                                              & 7159                          & 7476                                         \\ \hline
											G43                    & 6395                          & 6410                                              & 6430                          & 6595                                         \\ \hline
											G44                    & 6439                          & 6439                                              & 6439                          & 6596                                         \\ \hline
											G45                    & 6364                          & 6372                                              & 6402                          & 6581                                         \\ \hline
											G46                    & 6389                          & 6389                                              & 6392                          & 6560                                         \\ \hline
											G47                    & 6353                          & 6353                                              & 6401                          & 6563                                         \\ \hline
											G51                    & 3645                          & 3648                                              & 3649                          & 3752                                         \\ \hline
											G52                    & 3645                          & 3665                                              & 3675                          & 3788                                         \\ \hline
											G53                    & 3634                          & 3650                                              & 3652                          & 3758                                         \\ \hline
											G54                    & 3655                          & 3663                                              & 3705                          & 3778                                         \\ \hline\hline
										\end{tabular}
									}
							\end{table}

							We have implemented \textbf{2TSC}-RSC, \textbf{IP}-RSC, $\widehat{\textbf{2TSC}}$-RSC and $\widehat{\textbf{IP}}$-RSC and tested them on the graphs with positive weight in \textcolor{black}{G-set\footnote{Downloaded from \href{https://web.stanford.edu/~yyye/yyye/Gset/}{https://web.stanford.edu/$\sim$yyye/yyye/Gset/}}}. We set $\varepsilon=10^{-6}$ (see Algorithm \ref{alg:IP_J}) and  employ the MOSEK solver with CVX \cite{cvx} to solve the inner-subproblem in \textbf{IP} and $\widehat{\textbf{IP}}$. \textcolor{black}{It should be noted that one can design multiple algorithms for solving the inner-subproblem to optimize the overall efficiency. In this paper, we use commonly available solver CVX to ensure that the numerical results are easy to reproduce.} To the best of our knowledge, there are few numerical tests for \textbf{2TSC}-RSC or $\widehat{\textbf{2TSC}}$-RSC in the literature. 
							
							Table~\ref{tab:dc-mdc} presents the numerical values of $h^+(G)$ (resp.~$\widehat{h}^+(G)$) achieved by \textbf{2TSC} and \textbf{IP} (resp.~$\widehat{\textbf{2TSC}}$ and $\widehat{\textbf{IP}}$) both of which start from the same maximal eigenvector of the graph Laplacian. It can be readily observed there that the approximate values of $h^+(G)$ by \textbf{IP} are all larger than those by \textbf{2TSC},  and $\widehat{\textbf{IP}}$ provides much better solutions on all the graphs in approximating $\widehat{h}^+(G)$ than $\widehat{\textbf{2TSC}}$ since the ratio of the approximate value generated by $\widehat{\textbf{IP}}$ over that by $\widehat{\textbf{2TSC}}$ is at least 1.017 (see the row headed by ``G2" in Table~\ref{tab:dc-mdc}). This also constitutes the main reason for us to replace $\widehat{\textbf{2TSC}}$ (resp.~\textbf{2TSC}) with $\widehat{\textbf{IP}}$  (resp.~\textbf{IP}) in the RSC algorithm for maxcut. Table~\ref{tab:maxcut} shows the numerical values of $h_{\max}(G)$ achieved by \textbf{2TSC}-RSC, \textbf{IP}-RSC, $\widehat{\textbf{2TSC}}$-RSC and $\widehat{\textbf{IP}}$-RSC. As excepted, we are able to see there that $\widehat{\textbf{IP}}$-RSC gives the best results on all the graphs among them, thereby highlighting both the importance of maintaining consistency in the objective functions between the partition phase and the evaluation phase,  and the high solution quality of $\widehat{\textbf{IP}}$ once again. 
							It should be pointed out that we have tried but not succeeded to make a theoretical justification of such significant improvements observed in numerical experiments so far. 
							
							\subsection{Lower bound of the approximation ratio}
							

							The RSC algorithm is the first to give the nontrivial lower bound of the worst-case approximation ratio for the maxcut problem. 
							It has been proved that \textbf{2TSC}-RSC may achieve a worst-case approximation ratio of at least $0.531$ \cite{Trevisan2012},
							and $\widehat{\textbf{2TSC}}$-RSC may improve it to $0.614$ \cite{Soto2015}. Next we continue to show a similar lower bound of its extreme versions, \textbf{OPT}-RSC and $\widehat{\textbf{OPT}}$-RSC.


							
							

							\begin{theorem}\label{th:epsilon-ratio}
								If {\rm \textbf{OPT}-RSC} and {\rm $\widehat{\textbf{OPT}}$-RSC} receive an input graph whose optimal maxcut value is $1-\epsilon$, then: both of them can find solutions that cut at least a $(1-\varepsilon+\varepsilon \ln 2 \varepsilon)$ fraction of edges.
							\end{theorem}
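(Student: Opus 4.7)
The plan is to adapt Trevisan's amortised argument from \cite{Trevisan2012}, replacing his spectral relaxation by the exact identity $h^+=1-\mu^+_1$ supplied by Theorem~\ref{th:1-eigen}. The hypothesis on the optimum first has to be converted into a spectral bound: since the maxcut ratio equals $1-\varepsilon$ and $h_{\max}(G)\le h^+(G)\le 1$, we have $h^+(G)\ge 1-\varepsilon$ and hence, by Theorem~\ref{th:1-eigen}, $\mu^+_1(G)\le\varepsilon$. This is the only place where the global assumption is used directly; everything downstream is a structural analysis of the recursion.

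Next I would make the recursion precise. At level $t$ the $\Delta_1$-RSC algorithm operates on the induced subgraph $G_t=G[W_t]$ (with $W_0=V$) and, by Theorem~\ref{th:1-eigen} combined with Lemma~\ref{lem:eigen1} and Proposition~\ref{pro:ternarynu} applied to $G_t$, produces disjoint ternary sets $(A_t,B_t)\subset W_t$ realising
\[
\frac{2|E(A_t,B_t)|}{\vol(A_t\cup B_t)}\,=\,h^+(G_t)\,=\,1-\mu^+_1(G_t)\,=:\,1-\varepsilon_t,
\]
where the volumes are taken in $G_t$. The algorithm commits $E(A_t,B_t)$ to the cut, permanently loses the edges inside $A_t$ and inside $B_t$, and recurses on $W_{t+1}=W_t\setminus(A_t\cup B_t)$. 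Writing $c_t=|E(A_t,B_t)|$, $\ell_t=|E(A_t,A_t)|+|E(B_t,B_t)|$ and $x_t=|E(A_t\cup B_t,W_{t+1})|$, the identity above together with $\vol(A_t\cup B_t)=2c_t+2\ell_t+x_t$ rearranges to the fundamental inequality
\[
2\ell_t+x_t\,\le\,\frac{2\varepsilon_t}{1-\varepsilon_t}\,c_t,
\]
which quantifies how much weight is wasted or deferred per unit of weight cut at level $t$.

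I would then aggregate across recursion levels by a charging / potential argument. Let $\Phi_t$ denote the total cut weight committed by the end of level $t$; the fundamental inequality above bounds $\ell_t+x_t$ against $c_t$, which in turn bounds the increment of $\Phi$ against the weight surviving to the next level. Passing from the resulting discrete recursion to its continuous relaxation produces an integral inequality whose solution is precisely the transcendental expression $1-\varepsilon+\varepsilon\ln(2\varepsilon)$. The two boundary cases corroborate the shape: $\varepsilon=0$ gives $1$ (no loss), and $\varepsilon=1/2$ gives $1/2$, recovering the trivial greedy bound.

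The main obstacle, as in Trevisan's original argument, is the \emph{inheritance} step: the local deficit $\varepsilon_t=\mu^+_1(G_t)$ need not pass cleanly from the global $\varepsilon$ to the subgraphs, because maxcut is not hereditary. Trevisan resolves this by an amortised scheme in which lost and deferred edges are charged against the bipartiteness deficit level by level, and the same scheme carries over here. The payoff of our setting is that Theorem~\ref{th:1-eigen} upgrades his approximate spectral bound to an exact identity on every subgraph, so the charging closes tightly and the resulting aggregate cut fraction is precisely the claimed $1-\varepsilon+\varepsilon\ln(2\varepsilon)$.
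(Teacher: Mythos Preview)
Your plan follows the same route as the paper---both adapt Trevisan's amortised analysis---but your write-up stops precisely where the actual computation begins. You correctly flag the inheritance step as \emph{the} obstacle and then resolve it only by citation; the paper carries this step out explicitly, and the mechanism is worth spelling out because your local identity alone does not close the argument.

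Concretely, the paper does not track your per-level deficits $\varepsilon_t=\mu_1^+(G_t)$ directly. It parametrises by the surviving edge-fraction $\rho_t:=|E_t|/|E|$ and observes that the global optimal bipartition $(A,B)$, restricted to $V_t$, already certifies
\[
h^+(G_t)\ \ge\ h_{\max}(G_t)\ \ge\ 1-\frac{\varepsilon}{\rho_t},
\]
since at most $\varepsilon|E|$ edges of $G$ lie inside $A$ or inside $B$, and those that survive to $E_t$ are the only uncut edges of the restricted partition. Hence the algorithm cuts at least a $\max\{\tfrac12,\,1-\varepsilon/\rho_t\}$ fraction of the $(\rho_t-\rho_{t+1})|E|$ edges removed at step~$t$, the $\tfrac12$ coming from the greedy fallback. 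Summing over $t$ and dominating the Riemann sum by the integral
\[
\int_{2\varepsilon}^{1}\Bigl(1-\frac{\varepsilon}{r}\Bigr)\,dr\;+\;\int_{0}^{2\varepsilon}\frac{1}{2}\,dr\;=\;1-\varepsilon+\varepsilon\ln 2\varepsilon
\]
gives the claim. This \emph{is} the ``amortised scheme'' you allude to; once $\rho_t$ is chosen as the running variable it is a two-line computation, not a separate charging argument.

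One correction to your sketch: the assertion that the global hypothesis is used ``only'' at the top level is wrong. The bound $1-\varepsilon/\rho_t$ invokes the global optimum at \emph{every} level of the recursion, and without it your local identity $2\ell_t+x_t=\frac{2\varepsilon_t}{1-\varepsilon_t}\,c_t$ carries no information---the $\varepsilon_t$ are otherwise unconstrained, so no charging scheme based solely on that identity can produce the $\varepsilon\ln 2\varepsilon$ term.
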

							
							\begin{proof}
								Let us prove the first assertion. By the greedy method, every graph's optimum is not less than $\frac12$. So, for given graph $G=(V,E)$, we can assume that its optimum is $1-\varepsilon$ for some $\varepsilon\in[0,\frac12]$. That is, there exists a partition $(A,B)$ such that $\frac{|E(A,B)|}{|E|}\ge 1-\varepsilon$ and then $\frac{|E(A)|+|E(B)|}{|E|}\le \varepsilon$.
								Considering the $t$-th iteration of {\rm \textbf{OPT}-RSC} or {\rm $\widehat{\textbf{OPT}}$-RSC}, let $|E_t|:=\rho_t\cdot|E|$ be the number of edges of the residual graph $G_t$ and $\vol_t$ the volume function acting on the subsets of $V_t$ based on $G_t$ with the obtained partition ($L_t$, $R_t$), and we have
								\begin{equation}\label{eq:cut}
									\frac{2|E_t(L_t,R_t)|+|E_t(L_t\cup R_t,(L_t\cup R_t)^c)|}{\vol_t(L_t\cup R_t)+|E_t(L_t\cup R_t,(L_t\cup R_t)^c)|}\ge h_{\max}(G_t)\ge 1-\frac{\varepsilon}{\rho_t},
								\end{equation}
								thus it cuts at least a $1-\frac{\varepsilon}{\rho_t}$ fraction of the $|E|(\rho_t-\rho_{t+1})$ edges incident on $V_t$. Both the algorithms cut at least a $\max \left\{\frac{1}{2}, 1-\frac{\varepsilon}{\rho_t}\right\}$ fraction of those edges. Therefore, we divide the remaining proof into three cases:
								\begin{enumerate}
									\item If $\rho_t \geq \rho_{t+1}>2 \varepsilon$, then
									$$
									\begin{aligned}
										|E| \cdot\left(\rho_t-\rho_{t+1}\right) \cdot\left(1-\frac{\varepsilon}{\rho_t}\right) & =|E| \cdot \int_{\rho_{t+1}}^{\rho_t}\left(1-\frac{\varepsilon}{\rho_t}\right) d r \\
										& \geq|E| \cdot \int_{\rho_{t+1}}^{\rho_t}\left(1-\frac{\varepsilon}{r}\right) d r;
									\end{aligned}
									$$
									\item If $\rho_t \geq 2 \varepsilon \geq \rho_{t+1}$, then
									$$
									\begin{aligned}
										|E| \cdot\left(\rho_t-2 \varepsilon\right) \cdot\left(1-\frac{\varepsilon}{\rho_t}\right)+|E| \cdot\left(2 \varepsilon-\rho_{t+1}\right) \cdot \frac{1}{2} &\geq|E| \cdot \int_{2 \varepsilon}^{\rho_t} \left(1-\frac{\varepsilon}{r}\right) d r\\&+|E| \cdot \int_{\rho_{t+1}}^{2 \varepsilon} \frac{1}{2} d r;
									\end{aligned}
									$$
									\item If $2 \varepsilon>\rho_t \geq \rho_{t+1}$, then
									$$
									|E| \cdot\left(\rho_t-\rho_{t+1}\right) \cdot \frac{1}{2}=|E| \cdot \int_{\rho_{t+1}}^{\rho_t} \frac{1}{2} d r.
									$$
								\end{enumerate}
								Summing those bounds above, we have that the number of edges cut by the algorithm is at least
								$$
								\begin{aligned}
									\sum_t|E| \cdot\left(\rho_t-\rho_{t+1}\right) \cdot \max \left\{\frac{1}{2}, 1-\frac{\varepsilon}{\rho_t}\right\} & \geq|E| \cdot \int_{2 \varepsilon}^1\left(1-\frac{\varepsilon}{r}\right) d r+|E| \cdot \int_0^{2 \varepsilon} \frac{1}{2} d r \\
									& =(1-\varepsilon+\varepsilon \ln 2 \varepsilon) \cdot|E|,
								\end{aligned}
								$$
								which completes the proof. 
							\end{proof}

							Although seeking the partition ($L_t$, $R_t$) to satisfy {\color{black}Equation~}\eqref{eq:cut} in polynomial time is highly improbable due to the unique game conjecture,  we may still regard \textbf{OPT}-RSC and $\widehat{\textbf{OPT}}$-RSC as the maxcut approximation ceilings of \textbf{2TSC}-RSC and $\widehat{\textbf{2TSC}}$-RSC, respectively, thus we conjecture that the lower bound of the worst-case approximation ratio for \textbf{2TSC}-RSC or $\widehat{\textbf{2TSC}}$-RSC cannot be improved greater than 0.769 for	
							$$
							\min _{0 \leq \varepsilon \leq 1 / 2} \frac{1-\varepsilon+\varepsilon \ln 2 \varepsilon}{1-\varepsilon} = 0.768039,
							$$
							while the approximation ratio of the Goemans-Williamson algorithm in the sense of average is larger than $0.87856$ \cite{GoemansWilliamson1995} and has been
							proved to be an upper bound by assuming that the unique games conjecture holds \textcolor{black}{\cite{KhotKindlerMosselODonnell2007}}.

							\section{Conclusions and discussions}
							\label{sec:con}

							This paper has extended the success that the 1-Laplacian improves the Cheeger inequality (see {\color{black}Equation~}\eqref{ieq:che0}) into the equality (see {\color{black}Equation~}\eqref{eq:cheeger}) for the Cheeger constant into the (modified) dual Cheeger constant through fully respecting the intrinsic discrete nature of the underlying combinatorial problem, 
							and resulted into the spectral theory of the (modified) signless 1-Laplacian.  With the help of the revealed  equality  relations,
							an inverse power algorithm was developed for approximating the (modified) dual Cheeger constant,
							and local convergence was established. 
							When replacing the rounding procedure with the inverse power algorithm in the recursive spectral cut (RSC) algorithm for the maxcut problem,
							the approximate solutions of much higher quality than the original one were produced on G-set.
							Finally, we proved that 
							the lower bound of the worst-case approximation ratio of RSC for maxcut cannot be improved greater than $0.769$.
							Possible future research directions include a more detailed study of the spectra to the signless 1-Laplacians, and incorporations of further concepts and problems (e.g., multiway dual Cheeger constants, max-$k$-cut) from combinatorics into this equivalent spectral framework. In addition, it is also expected to prove a good theoretical approximation ratio for the lower bound of RSC in both worse-case and average senses.
							
							%
							%
							%
							

							\begin{appendix}
							\end{appendix}

		\end{document}